%
%
%


\documentclass[12pt,reqno]{amsart}




\usepackage{amsmath,amssymb,amsfonts,amsthm,amscd,amstext,amsxtra,amsopn,array,url,verbatim,mathrsfs}
\usepackage{graphicx}
\usepackage[mathscr]{euscript}
\usepackage{amsmath,amssymb,amsfonts,amsthm,amssymb,amscd,url,amstext,amsxtra,amsopn}
\usepackage{verbatim}
\usepackage{fullpage}
\usepackage{times}

\usepackage[colorlinks=true,linkcolor=black,citecolor=black]{hyperref}


\newtheorem{theorem}{Theorem}[section]
\newtheorem{lemma}[theorem]{Lemma}

\newtheorem{prop}[theorem]{Proposition}
\newtheorem{cor}[theorem]{Corollary}
\newtheorem{conj}[theorem]{Conjecture}

\theoremstyle{definition}

\newtheorem{example}[theorem]{Example}

\newtheorem{notation}[theorem]{Notation}

\newtheorem{remarks}[theorem]{Remarks}

\numberwithin{equation}{section}

\renewcommand{\mod}[1]{{\ifmmode\text{\rm\ (mod~$#1$)}\else\discretionary{}{}{\hbox{ }}\rm(mod~$#1$)\fi}}


\newcommand{\lcm}{{\rm l.c.m.}}
\newcommand{\degg}{{\rm deg}}

\begin{document}

\title{On the greatest prime factor of some divisibility sequences}


\author{Amir Akbary }
\address{Department of Mathematics and Computer Science \\
        University of Lethbridge \\
        Lethbridge, AB T1K 3M4 \\
        Canada}
\curraddr{}
\email{amir.akbary@uleth.ca}


\thanks{Research of the authors is partially supported by NSERC}

\author{Soroosh Yazdani}
\curraddr{}
\email{syazdani@gmail.com}

\subjclass[2010]{11T06, 11G05, 11J25}

\date{}

\begin{abstract}
Let $P(m)$ denote the greatest prime factor of $m$. For integer $a>1$, M. Ram Murty and S. Wong proved that, 
under the assumption of the ABC conjecture,
$$P(a^n-1)\gg_{\epsilon, a} n^{2-\epsilon}$$
for any $\epsilon>0$. We study analogues results for the corresponding divisibility sequence over the function field $\mathbb{F}_q(t)$ and for some divisibility sequences associated to elliptic curves over the rational field $\mathbb{Q}$. 
\vskip .25cm
\hfill \textit{In honor of M. Ram Murty on his sixtieth birthday}

\end{abstract}

\maketitle

\section{introduction and results}

Let $P(m)$ denote the greatest prime factor of the integer $m$. Several authors investigated the size of $P(2^n-1)$. In \cite[Lemma 3]{S} Schinzel proved that $$P(2^n -1) \geq 2n+1,~~{\rm for}~~n\geq 13.$$ In 1965, Erd\H{o}s \cite[p. 218]{Erdos} conjectured that $$\lim_{n\rightarrow \infty} \frac{P(2^n-1)}{n}=\infty.$$ This conjecture has been recently resolved by Stewart \cite{St}. More generally, for integers $a>b>0$, one can consider lower bounds in terms of $n$ for $P(a^n-b^n)$. The first general result on this problem is due to Zsigmondy \cite{Z} and independently to Birkhoff and Vandiver \cite{BV} who showed that
$$P(a^n-b^n) \geq n+1.$$
The best known result on this problem is the recent result of Stewart \cite[Formula (1.8)]{St} that states
$$ P(a^n-b^n) \geq n^{1+\frac{1}{104\log\log{n}}},$$
for $n$ sufficiently large in terms of the number of distinct prime factors of $ab$. Note that the above lower bound for $a=2$ and $b=1$ implies Erd\H{o}s' conjecture. We expect that $P(a^n-b^n)$ be much larger than $n^{1+\epsilon(n)}$, where $\epsilon(n)\rightarrow 0$ as $n\rightarrow \infty$. Here we describe a heuristic argument in support of this claim. To simplify our notation, we now focus on $a^n-1$. Similar observations hold for the sequence $a^n-b^n$.

{\tiny

\begin{table}[h]    
\centering
\begin{tabular}{ ||l|c||l|c||l|c| }
\hline
 $n$ & Factorization of $2^n-1$ & $n$  & Factorization of $2^n-1$ & $n$&  Factorization of $2^n-1$\\ 
\hline
$1$ &$1$ & $11$ &$23 \times 89$ & $21$ & $7^2\times 127 \times 337$\\
$2$ &$3$& $12$ &$3^2 \times 5 \times 17$ & $22$ & $3\times 23\times 89\times 683$\\
$3$ &$7$& $13$ & $8191$ & $23$ & $47 \times 178481$\\
$4$ &$3\times 5$ & $14$ &$3\times 43\times 127$ & $24$ &$ 3^2\times 5\times 7\times 13 \times 17 \times 241$\\
$5$ &$31$ & $15$ &$7\times 31\times 151$ & $25$ &$31\times 601 \times 1801$\\
$6$ &$3^2 \times 7$ & $16$ &$3\times 5\times 17\times 257$ & $26$ & $3 \times 2731 \times 8191$\\
$7$ &$127$& $17$ &$131071$ & $27$ & $7\times 73 \times 262657$\\
$8$ &$3\times 5\times 17$ & $18$ &$3^3 \times 7 \times  19 \times 73$ & $28$ & $ 3\times 5 \times 29 \times 43 \times 113 \times  127$\\
$9$ &$7\times 73 $ & $19$ &$524287$ & $29$ & $ 233 \times 1103 \times 2089$\\
$10$ &$3\times 11 \times 31$ & $20$ &$3\times 5^2 \times 11 \times 31 \times 41$ & $30$&    $3^2 \times 7 \times 11 \times 31 \times 151 \times 331 $\\
\hline
\end{tabular}
\end{table}

}

We write $a^n-1=u_n v_n$, where $u_n$ is power-free (square-free) and $v_n$ is power-full (the exponent of prime divisors of $v_n$ in the prime factorization of $v_n$ are greater than 1). Now if we denote the number of prime divisors of an integer $m$ by $\omega(m)$, then we can find a lower bound for $P(a^n-1)$ in terms of $u_n$ and $\omega(a^n-1)$ as follows.
We have $$P(a^n-1)^{\omega(a^n-1)}\geq P(u_n)^{\omega(a^n-1)}\geq u_n,$$
or equivalently 
\begin{equation}
\label{lower}
\log{P(a^n-1)} \geq \frac{\log{u_n}}{\omega(a^n-1)}.
\end{equation}
Thus a lower bound for $u_n$ and an upper bound for $\omega(a^n-1)$ furnishes a lower bound for the greatest prime factor of $a^n-1$.

By looking at the factorization of $a^n-1$ for different values of $a$ and $n$ (see the above table of prime factorization of $2^n-1$ for $1\leq n \leq 30$), we speculate the following two statements regarding the factorization of $a^n-1$. 

\noindent{\sc First Observation:} {\it The power-full part of $a^n-1$ is small.}

\noindent {\sc Second Observation:} {\it The number of prime factors of $a^n-1$ is small.}

\noindent These together with \eqref{lower} imply that $P(a^n-1)$ is large. 

The above argument can be quantified by using well-known   conjectures. Here we recall the celebrated $ABC$ conjecture and a conjecture of Erd\H{o}s on ${\rm ord}_p(a)$, the multiplicative order of an integer $a$ modulo a prime $p$.

\begin{conj}[{ABC conjecture of Masser-Oesterl\'{e}}]
\label{ABC-conj}
Let $A, B, C \in \mathbb{Z}$ be relatively prime integers satisfying $A+B+C=0$. Then for every $\epsilon>0$,
$$\max\{ |A|, |B|, |C| \} \ll_\epsilon \left(\prod_{\pi\mid ABC} \pi  \right)^{1+\epsilon}.$$
\end{conj}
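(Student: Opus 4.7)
The statement in question is the ABC conjecture of Masser and Oesterl\'{e}, one of the deepest open problems in Diophantine number theory; no proof of it is currently accepted by the mathematical community (Mochizuki's inter-universal Teichm\"uller programme claims one, but this remains disputed). Rather than pretend to supply a complete argument here, I will sketch the two or three approaches one would realistically attempt, and indicate where each runs aground.

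The most structured attack goes through Szpiro's conjecture via Arakelov geometry. Given a coprime triple $(A,B,C)$ with $A+B+C=0$, one attaches the Frey elliptic curve $E: y^2 = x(x-A)(x+B)$ over $\mathbb{Q}$; a short calculation shows its minimal discriminant is essentially $(ABC)^2$ and its conductor is essentially the radical $N=\prod_{\pi\mid ABC}\pi$. Szpiro's conjecture, $|\Delta_{\min}(E)| \ll_{\epsilon} N^{6+\epsilon}$, is then equivalent to ABC. I would try to establish it by bounding the Faltings height of $E$ in terms of its conductor using Arakelov intersection theory on the arithmetic surface $\mathcal{E}\to \mathrm{Spec}\,\mathbb{Z}$ and then comparing this to the discriminant via the Noether formula. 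A complementary route uses modularity: by Wiles--Breuil--Conrad--Diamond--Taylor, $E$ is modular of some level dividing $N$, so one could attempt to control $\max\{|A|,|B|,|C|\}$ through dimension estimates for spaces of weight-$2$ newforms of level $N$ together with Fourier-coefficient bounds. A third, more elementary, avenue applies Belyi's theorem to reduce the integer statement to a height inequality for rational points on curves of higher genus, where one hopes to exploit Faltings' theorem or effective variants thereof.

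The main obstacle, and the reason the conjecture remains open, is the chasm between the known unconditional bounds and the conjectural ones. Stewart and Yu, using lower bounds for linear forms in $p$-adic logarithms from Baker's school, obtain only
\[\max\{|A|,|B|,|C|\} \ll \exp\bigl(\kappa\, N^{1/3}(\log N)^{3}\bigr),\]
which is exponentially weaker than $N^{1+\epsilon}$. Collapsing this exponential gap to polynomial is precisely the content of the conjecture; every known transcendence-theoretic input saturates at the exponential barrier, while every geometric input (Arakelov, Belyi, modularity) ultimately requires as an ingredient the very sort of sharp height inequality one is trying to prove. Until a genuinely new principle is injected, or Mochizuki's framework is independently verified, any proof proposal for the ABC conjecture is in practice a catalogue of obstructions rather than an executable strategy; this is why, in the sections that follow, the conjecture is invoked as a hypothesis and combined with the two \emph{Observations} above to derive conditional lower bounds for $P(a^n-1)$ and its analogues.
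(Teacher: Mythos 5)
You have correctly identified that this statement is Conjecture~\ref{ABC-conj}, an open conjecture that the paper assumes as a hypothesis and does not prove -- its only citation is to Masser's statement of the problem -- so declining to supply a proof is exactly the paper's own stance. Your survey of the standard attack routes (Frey curves and Szpiro, Belyi, and the exponential Stewart--Yu bounds) is accurate and consistent with how the conjecture is used conditionally throughout the paper.
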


\begin{conj}[{Erd\H{o}s}]
\label{Erdos}
For an integer $a$ and a  positive integer $r$, let $$E_a(r)=\#\{p~{\rm prime};~{\rm ord}_p(a)=r\}.$$ 
Then for every $\epsilon>0$ we have $$E_a(r) \ll_\epsilon r^\epsilon.$$
\end{conj}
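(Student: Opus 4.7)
The plan is to express $E_a(r)$ in terms of the distinct prime divisors of the cyclotomic value $\Phi_r(a)$, and then to bound that count by splitting primes into a ``small'' and a ``large'' range, combining effective Chebotarev on the Kummer extension $K_r=\mathbb{Q}(\zeta_r,a^{1/r})$ with the trivial size inequality for $\Phi_r(a)$.

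First, I reduce to bounding $\omega(\Phi_r(a))$. The identity $a^r-1=\prod_{d\mid r}\Phi_d(a)$ together with the classical characterization (for $p\nmid r$, $p\mid\Phi_r(a)$ iff $\mathrm{ord}_p(a)=r$) gives
\[ E_a(r)\leq \omega\bigl(\Phi_r(a)\bigr)+\omega(r). \]
Since $\omega(r)\ll \log r/\log\log r$ is $O_\epsilon(r^\epsilon)$, it suffices to prove $\omega(\Phi_r(a))\ll_\epsilon r^\epsilon$. Every prime divisor $p$ of $\Phi_r(a)$ with $p\nmid r$ satisfies two constraints: $p\equiv 1\pmod r$, so $p\geq r+1$; and $a$ is a primitive $r$-th root of unity modulo $p$, so $p$ is an unramified degree-one prime in $K_r$ that does not split completely in any proper intermediate subfield $\mathbb{Q}(\zeta_r,a^{1/d})$ with $d\mid r$, $d<r$. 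Assuming without loss of generality that $a$ is not a perfect power, $[K_r:\mathbb{Q}]\asymp r\phi(r)$.

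I then fix a threshold $T=T(r,\epsilon)$ and split the prime divisors of $\Phi_r(a)$ into a small range $(r,T]$ and a large range $(T,\infty)$. In the small range, M\"obius inversion across the subfield tower together with an effective Chebotarev density theorem for $K_r$ (conditional on GRH) bounds the count by $\ll T/(r\phi(r)\log T)+O(r^{1/2+\epsilon}\log(rT))$. In the large range, each prime contributes at least $\log T$ to $\log\Phi_r(a)\leq \phi(r)\log(a+1)$, so their number is at most $\phi(r)\log(a+1)/\log T$. Summing both contributions and choosing $T$ optimally is intended to yield the target bound $\omega(\Phi_r(a))\ll_\epsilon r^\epsilon$.

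The principal obstacle is precisely this optimization of $T$. The large-prime bound forces $\log T\gg \phi(r)/r^\epsilon$, so $T$ must grow at least exponentially in $r^{1-\epsilon}$; yet effective Chebotarev controls the small-prime count only up to $T$ polynomial in the discriminant of $K_r$, which is itself only polynomial in $r$. Closing this gap requires either an equidistribution statement for the Kummer family valid in exponential ranges, well beyond GRH, or an ABC-style input in the spirit of Conjecture \ref{ABC-conj} showing that $\Phi_r(a)$ is almost squarefree with prime divisors concentrated near $r$; indeed, a radical bound $\mathrm{rad}(\Phi_r(a))\gg_\epsilon \Phi_r(a)^{1-\epsilon}$ combined with $p\geq r+1$ for every prime factor would suffice to conclude. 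Establishing such a radical bound for cyclotomic values is, in my view, the cleanest route to the full $r^\epsilon$ conclusion.
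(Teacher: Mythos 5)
The statement you are addressing is not a theorem of the paper: it is Erd\H{o}s's conjecture (Conjecture \ref{Erdos}, formulated in \cite{Econjecture} for $a=2$), and it is open. The paper offers no proof and never claims one; it only uses the conjecture as a hypothesis, alongside the ABC conjecture, in the Felix--Murty theorem. So there is no argument in the paper to compare yours against, and a complete proof would be a major new result rather than a routine verification.

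Your proposal is honest about where it fails, and the failure is genuine. The reduction $E_a(r) \leq \omega(\Phi_r(a)) + \omega(r)$ is correct and standard, but the two ranges cannot be glued: effective Chebotarev, even under GRH, controls primes only up to a threshold polynomial in the discriminant data of $\mathbb{Q}(\zeta_r, a^{1/r})$, hence polynomial in $r$, while the trivial size bound $\log \Phi_r(a) \ll \phi(r)$ forces your large-range threshold $T$ to satisfy $\log T \gg \phi(r) r^{-\epsilon}$, which is exponentially out of reach. Moreover, the fallback you propose as the ``cleanest route'' does not suffice either: a radical bound $\mathrm{rad}(\Phi_r(a)) \gg_\epsilon \Phi_r(a)^{1-\epsilon}$ together with the congruence $p \equiv 1 \pmod{r}$ only yields $\omega(\Phi_r(a)) \ll \phi(r)\log(|a|+1)/\log r$, because $\Phi_r(a)$ could perfectly well be squarefree with all of its roughly $\phi(r)/\log r$ prime factors of size comparable to $r$; squarefreeness bounds multiplicities, not the number of distinct prime divisors. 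This is precisely why the paper treats the bound on $\omega(a^n-1)$ as an input logically independent of ABC: ABC-type statements control the power-full part $v_n$ (Silverman's proposition) but say nothing about how many distinct primes divide the power-free part. You should present your text as a discussion of the conjecture and its difficulty, not as a proof.
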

Conjecture \ref{ABC-conj} is stated in \cite{Masser}. Conjecture \ref{Erdos} is formulated in \cite{Econjecture} for $a=2$.

In \cite[Lemma 7]{Silverman} Silverman provided the following statement in support of our first observation. 

\begin{prop}[{Silverman}]
Let $a^n-1=u_n v_n$ be the decomposition of $a^n-1$ as the product of the power-free part $u_n$ and power-full part $v_n$. Then for any $\epsilon>0$, under the assumption of Conjecture \ref{ABC-conj}, we have $$v_n \ll_{\epsilon, a} a^{\epsilon n}.$$
\end{prop}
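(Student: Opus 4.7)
The plan is a direct application of the $ABC$ conjecture to the relation $a^n - 1 \cdot 1 = a^n$, the only real trick being to exploit power-fullness of $v_n$ to bound the radical.

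First I would set up the radical estimate. Writing $\text{rad}(m)=\prod_{p\mid m}p$ as usual, the decomposition $a^n-1=u_n v_n$ with $u_n$ square-free, $\gcd(u_n,v_n)=1$, and $v_n$ power-full gives
\begin{equation*}
\text{rad}(a^n-1) \;=\; u_n\cdot \text{rad}(v_n).
\end{equation*}
Since every prime dividing $v_n$ does so to exponent at least $2$, we have $\text{rad}(v_n)^2\le v_n$, hence $\text{rad}(v_n)\le v_n^{1/2}$. Therefore
\begin{equation*}
\text{rad}(a^n-1) \;\le\; u_n\, v_n^{1/2} \;=\; \frac{a^n-1}{v_n^{1/2}}.
\end{equation*}

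Next I would apply Conjecture \ref{ABC-conj} to the triple $A=a^n,\ B=-1,\ C=-(a^n-1)$, which are pairwise coprime and satisfy $A+B+C=0$. The product of primes dividing $ABC$ is at most $a\cdot \text{rad}(a^n-1)$ (since $\text{rad}(a^n)\le a$), so the conjecture yields, for every $\epsilon>0$,
\begin{equation*}
a^n \;\ll_{\epsilon}\; \bigl(a\cdot \text{rad}(a^n-1)\bigr)^{1+\epsilon} \;\ll_{\epsilon,a}\; \bigl(\text{rad}(a^n-1)\bigr)^{1+\epsilon}.
\end{equation*}
Combining this with the radical bound above gives
\begin{equation*}
a^n \;\ll_{\epsilon,a}\; \left(\frac{a^n-1}{v_n^{1/2}}\right)^{1+\epsilon} \;\le\; \frac{a^{n(1+\epsilon)}}{v_n^{(1+\epsilon)/2}},
\end{equation*}
and rearranging,
\begin{equation*}
v_n^{(1+\epsilon)/2} \;\ll_{\epsilon,a}\; a^{n\epsilon}.
\end{equation*}
Finally, dropping the $(1+\epsilon)$ factor in the exponent and absorbing the constant $2$ into a redefinition of $\epsilon$, I obtain $v_n\ll_{\epsilon,a} a^{\epsilon n}$, as claimed.

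There is no real obstacle in the argument: the whole proof is essentially the observation that power-fullness of $v_n$ forces its contribution to $\text{rad}(a^n-1)$ to be at most $v_n^{1/2}$, after which $ABC$ does all the work. The only small bookkeeping issue is the harmless loss of a constant and a factor of $2$ in the exponent, which is absorbed by the final rechoice of $\epsilon$.
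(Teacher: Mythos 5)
Your proof is correct and is essentially the argument the paper relies on: the paper gives no proof of its own but cites Silverman's Lemma 7, and your combination of $\mathrm{rad}(v_n)\le v_n^{1/2}$ with the $ABC$ conjecture applied to $a^n-(a^n-1)-1=0$ is exactly that standard argument. The bookkeeping at the end (absorbing the factor $2/(1+\epsilon)$ into a rechoice of $\epsilon$) is handled correctly.
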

From this proposition we conclude that under the assumption of Conjecture \ref{ABC-conj}, the power-free part of $a^n-1$ is large. More precisely for $\epsilon>0$ we have $$u_n\gg_{\epsilon, a} a^{(1-\epsilon)n}.$$

We know that the normal order of $\omega(n)$ is $\log\log{n}$. From here we may speculate that $\omega(a^n-1) \approx \log{n}.$ However as a consequence of a theorem of Prachar we can show that $\omega(a^n-1)$ is greater than $\log{n}$ for infinitely many values of $n$. More precisely, in \cite[Satz 2]{Pr}, Prachar proves that 
$$\#\{p~{\rm prime};~(p-1)\mid n\}\geq \exp\left(\frac{c\log{n}}{(\log\log{n})^2} \right),$$
for some 
$c>0$ and for infinitely many $n$.  This implies that there exists $c>0$ such that $$\omega(a^n-1)\geq \exp\left(\frac{c\log{n}}{(\log\log{n})^2} \right),$$ 
for infinitely many $n$.
In \cite{FM}, Felix and Murty observed that $$\omega(a^n-1)=\#\{p~{\rm prime};~ p\mid a^n-1 \}=\sum_{d\mid n} E_a(d).$$
So under the assumption of Conjecture \ref{Erdos}, we have
$$\omega(a^n-1)\ll_\epsilon n^\epsilon.$$

The above observations are summarized in the following theorem (see \cite[Section 5]{FM}).

\begin{theorem}[{Felix-Murty}]
Under the assumptions of Conjecture \ref{ABC-conj} and Conjecture \ref{Erdos}, for any $\epsilon>0$, we have $$P(a^n-1)\gg _{\epsilon,a} a^{n^{1-\epsilon}}.$$
\end{theorem}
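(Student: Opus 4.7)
The plan is to combine the two conditional estimates prepared earlier in the excerpt via the elementary inequality \eqref{lower},
$$\log P(a^n-1) \geq \frac{\log u_n}{\omega(a^n-1)},$$
by bounding the numerator from below through Silverman's proposition (which quantifies the first observation) and the denominator from above through the Felix--Murty decomposition together with Conjecture \ref{Erdos} (which quantifies the second observation). Everything then reduces to careful bookkeeping of $\epsilon$'s.

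For the numerator, Silverman's proposition asserts $v_n \ll_{\epsilon,a} a^{\epsilon n}$ under Conjecture \ref{ABC-conj}, so $u_n = (a^n-1)/v_n \gg_{\epsilon,a} a^{(1-\epsilon)n}$, which, for $n$ sufficiently large in terms of $\epsilon$ and $a$, yields $\log u_n \geq (1-2\epsilon)\,n\log a$. For the denominator, the Felix--Murty identity $\omega(a^n-1)=\sum_{d\mid n} E_a(d)$ combined with the bound $E_a(d) \ll_\epsilon d^\epsilon$ from Conjecture \ref{Erdos}, together with the standard divisor bound $\tau(n)\ll_\epsilon n^\epsilon$, gives
$$\omega(a^n-1) \ll_\epsilon \sum_{d\mid n} d^\epsilon \leq n^\epsilon\,\tau(n) \ll_\epsilon n^{2\epsilon}.$$

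Substituting these two estimates into \eqref{lower} produces
$$\log P(a^n-1) \gg_{\epsilon, a} \frac{(1-2\epsilon)\,n\log a}{n^{2\epsilon}} \gg_{\epsilon, a} n^{1-2\epsilon},$$
and relabelling $2\epsilon$ as $\epsilon$ gives $\log P(a^n-1) \gg_{\epsilon,a} n^{1-\epsilon}\log a$. For $n$ sufficiently large in terms of $\epsilon$ and $a$, the positive constant in $\gg_{\epsilon,a}$ can be absorbed so that $\log P(a^n-1) \geq n^{1-\epsilon}\log a$; finitely many small $n$ are absorbed into the implicit constant, yielding $P(a^n-1) \gg_{\epsilon,a} a^{n^{1-\epsilon}}$.

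There is no real obstacle beyond this bookkeeping, since the two conjectural inputs have already been packaged in exactly the forms required. The conceptual point is simply that the ABC conjecture forces the power-full part of $a^n-1$ to be negligibly small, while Erd\H{o}s's conjecture forces $\omega(a^n-1)$ to be only $n^{o(1)}$; the product of these two savings is precisely what promotes the trivial lower bound $P(a^n-1)\geq \log(a^n-1)/\omega(a^n-1)$ up to the claimed near-exponential size $a^{n^{1-\epsilon}}$.
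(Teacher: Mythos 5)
Your proof is correct and follows essentially the same route the paper indicates: it combines the inequality \eqref{lower} with Silverman's bound $v_n \ll_{\epsilon,a} a^{\epsilon n}$ (hence $u_n \gg_{\epsilon,a} a^{(1-\epsilon)n}$) and the identity $\omega(a^n-1)=\sum_{d\mid n}E_a(d)$ together with Conjecture \ref{Erdos} to get $\omega(a^n-1)\ll_\epsilon n^{\epsilon}$, exactly as the paper summarizes before stating the theorem. The $\epsilon$-bookkeeping is handled correctly, so nothing further is needed.
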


It is interesting to note that the small size of $\omega(a^n-1)$ plays a crucial role in the proof of the above theorem. In fact under the assumption of Conjecture \ref{ABC-conj} and by employing the unconditional upper bound $\omega(a^n-1)\ll n/\log{n}$ and \eqref{lower}, we get $$P(a^n-1)\gg_{\epsilon,a} n^{1-\epsilon},$$
which is weaker than  known unconditional bounds. So it was remarkable that in 2002, Murty and Wong \cite[Theorem 1]{MW}, without appealing to any bound for $\omega(a^n-1)$, could prove the following theorem.

\begin{theorem}[{Murty-Wong}]
\label{MW}
Under the assumption of Conjecture \ref{ABC-conj}, for any $\epsilon>0$, we have $$P(a^n-1)\gg _{\epsilon,a} {n^{2-\epsilon}}.$$
\end{theorem}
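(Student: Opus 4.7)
The plan is to apply the ABC conjecture to obtain a lower bound on ${\rm rad}(a^n-1)$, and then to convert this into a lower bound on $P:=P(a^n-1)$ by partitioning the prime divisors of $a^n-1$ according to their multiplicative order and controlling each class by two complementary estimates.

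First, from Silverman's proposition recalled above, under Conjecture~\ref{ABC-conj} we have $v_n\ll_{\epsilon,a}a^{\epsilon n}$, whence the power-free part satisfies $u_n\gg_{\epsilon,a}a^{n(1-\epsilon)}$. Since $u_n$ divides ${\rm rad}(a^n-1)$, this gives
$${\rm rad}(a^n-1)\ \gg_{\epsilon,a}\ a^{n(1-\epsilon)} \qquad(\epsilon>0).$$

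Second, I partition the primes dividing $a^n-1$ by their multiplicative order: for each $d\mid n$ set $S_d=\{p \text{ prime} : p\mid a^n-1,\ {\rm ord}_p(a)=d\}$. Every prime divisor of $a^n-1$ lies in exactly one $S_d$, and
$$\log{\rm rad}(a^n-1)=\sum_{d\mid n}\sum_{p\in S_d}\log p.$$
For each $d\mid n$, the inner sum admits two independent bounds. Since $p\in S_d$ implies $p\mid\Phi_d(a)$,
$$\sum_{p\in S_d}\log p\leq\log\Phi_d(a)\leq\phi(d)\log(a+1).$$
On the other hand, every non-intrinsic $p\in S_d$ satisfies $p\equiv 1\pmod d$ and $p\leq P$, so
$$\sum_{p\in S_d}\log p\leq|S_d|\log P\leq\Bigl(\frac{P}{d}+1\Bigr)\log P,$$
where the finitely many intrinsic primes (those with $p\mid d$) contribute at most $O(\log n)$ per divisor. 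The geometric-mean inequality $\min(x,y)\leq\sqrt{xy}$ then yields the uniform estimate
$$\sum_{p\in S_d}\log p\leq\sqrt{\phi(d)\bigl(P/d+1\bigr)\log(a+1)\log P}\leq\sqrt{2P\log(a+1)\log P}$$
for every $d\mid n$ with $d\leq P$; divisors $d>P$ contribute only intrinsic primes.

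Third, summing over the $\tau(n)\ll_\epsilon n^\epsilon$ divisors of $n$ and comparing with the ABC lower bound yields
$$n(1-\epsilon)\log a\ \ll_\epsilon\ n^\epsilon\sqrt{P\log P\log(a+1)},$$
which squares to $P\log P\gg_{\epsilon,a}n^{2-2\epsilon}$; absorbing the $\log P$ factor into a fresh $\epsilon$ gives $P\gg_{\epsilon,a}n^{2-\epsilon}$.

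The main obstacle is the uniform case analysis across all $d\mid n$: for $d$ close to $P$ the trivial congruence count $|S_d|\leq P/d+1$ must be used directly (since Brun--Titchmarsh degrades), the factor $d/\phi(d)=O(\log\log n)$ must be absorbed into the final $n^\epsilon$, and the intrinsic primes must be tracked separately. The conceptual reason the exponent doubles from the unconditional $n^1$ to the conjectural $n^2$ is precisely the geometric-mean combination of the algebraic bound $\phi(d)\log(a+1)$ (sharp for small $d$) with the combinatorial bound $(P/d+1)\log P$ (sharp for large $d$).
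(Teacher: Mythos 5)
Your proof is correct, but note that the paper does not actually prove Theorem \ref{MW} --- it quotes it from Murty--Wong \cite{MW} --- so the natural comparison is with the paper's proofs of its two analogues (Theorems \ref{MW-functionfield} and \ref{elliptic-theorem}), and there your route is genuinely different. The paper's strategy in both cases is to isolate the \emph{primitive} part of the sequence ($C_\ell=(a^\ell-1)/(a-1)$ in the function-field case, the primitive divisor $D_n$ in the elliptic case), show via ABC/Mason that its power-full part is negligible so that its power-free part carries almost all of the mass, and then exploit the fact that every prime dividing that primitive part lies in a \emph{single} congruence/order class, which is what produces the extra factor of $n$ in the exponent. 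You instead keep all of $a^n-1$, use only the crude consequence of Silverman's Proposition that $\mathrm{rad}(a^n-1)\gg_{\epsilon,a}a^{n(1-\epsilon)}$, and recover the exponent $2$ by playing the algebraic bound $\sum_{p\in S_d}\log p\le\phi(d)\log(a+1)$ against the counting bound $|S_d|\le P/d+1$ via $\min(x,y)\le\sqrt{xy}$, uniformly in $d\mid n$; since $\phi(d)(P/d+1)\le 2P$ for $d\le P$ and $S_d=\emptyset$ for $d>P$, the divisor sum costs only $\tau(n)\ll n^{\epsilon}$ and the conclusion follows. This checks out (note that with $S_d$ defined by $\mathrm{ord}_p(a)=d$ one automatically has $p\equiv 1\pmod d$, so your ``intrinsic prime'' caveat is unnecessary, and Brun--Titchmarsh never enters). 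It is worth observing that the geometric-mean step is doing real work: a threshold split of the divisors of $n$ at $d\le y$ versus $d>y$, which is what the paper's elliptic-curve proof uses (with $z=n^{1-\epsilon}$), would only yield $P\gg n^{3/2-\epsilon}$ in this multiplicative setting, so either your convexity trick or the paper's primitive-part isolation is needed to reach $n^{2-\epsilon}$. What the paper's approach buys in exchange is flexibility: isolating the primitive part is what generalizes to the elliptic setting, where no analogue of the clean bound $\Phi_d(a)\le(a+1)^{\phi(d)}$ for every $d\mid n$ is available.
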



The sequence $a^n-1$ is an example of a divisibility sequence. 
A sequence $(d_n)$ of integers is called a \emph{divisibility sequence} if 
$$m\mid n \Rightarrow d_m \mid d_n.$$
In this paper, under certain conditions, we extend Murty-Wong's theorem to divisibility sequences other than $a^n-1$.

Our first generalization is a function field analogue of Theorem \ref{MW}. Let $\mathbb{F}_q$ be a finite field of characteristic $p$. For $b(t)\in \mathbb{F}_q[t]$, let $G(b(t))$ be the greatest of the degrees of the irreducible factors of $b(t)$. 
Then we ask how large can $$ G(a(t)^n-1)$$ be? Here we prove the following result related to this question.

\begin{theorem}
\label{MW-functionfield} Let $a(t)\in \mathbb{F}_q[t]$ be a polynomial that is not a perfect $p$-th power.
Let $$\mathcal{S}_{\alpha}=\{{\rm prime}~\ell;~\ell\neq p,~{\rm ord}_\ell(q)\geq \ell^\alpha \}.$$
Then for $\epsilon>0$ we have the following assertions.

\noindent (i) There is a constant $C=C(\epsilon, q, a(t))$ such that
$$G(a(t)^\ell-1)\geq(1+\alpha-\epsilon)\log_q{\ell}+C,~\textrm{for all}~\ell \in \mathcal{S}_{\alpha}.$$

\noindent (ii) There is a constant $C=C(\epsilon, q, a(t))$ such that for all primes $\ell\leq x$, except possibly $o(x/\log{x})$ of them,  we have $$ G(a(t)^{\ell}-1)\geq (3/2-\epsilon) \log_q{\ell}+C.$$

\noindent (iii) Assume that for all integers $d\geq 1$ the generalized Riemann hypothesis (GRH) holds for the Dedekind zeta function of $\mathbb{Q}(\zeta_d, q^{1/d})$, where $\zeta_d$ is a primitive $d$-th root of unity. Then there is a constant $C=C(\epsilon, q, a(t))$ such that for all primes $\ell\leq x$, except possibly $o(x/\log{x})$ of them,  we have $$ G(a(t)^{\ell}-1)\geq (2-\epsilon) \log_q{\ell}+C.$$
\end{theorem}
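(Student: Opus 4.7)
My plan is to follow the strategy of Murty--Wong (Theorem~\ref{MW}), with the ABC conjecture replaced by its unconditional function field analogue, the Mason--Stothers theorem. All three assertions will flow from a single core estimate (part (i)); parts (ii) and (iii) then reduce to showing that $\mathcal{S}_\alpha$ contains all but $o(x/\log x)$ primes $\ell\leq x$, unconditionally for $\alpha$ close to $1/2$ and under the stated GRH for $\alpha$ close to $1$.

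For part (i), fix $\ell\in\mathcal{S}_\alpha$ (so $\ell\neq p$), and apply Mason--Stothers to the coprime triple $(a(t)^\ell,\,1-a(t)^\ell,\,-1)$. This is admissible because $a(t)$ is not a $p$-th power and $\ell\neq p$, so $a(t)^\ell$ is not a $p$-th power. The resulting inequality $\ell\deg(a)\leq\deg\mathrm{rad}(a)+\deg\mathrm{rad}(a^\ell-1)-1$ gives $\deg\mathrm{rad}(a^\ell-1)\geq(\ell-1)\deg(a)+1$. Each monic irreducible factor $f$ of $a^\ell-1$ has ${\rm ord}_f(a)\in\{1,\ell\}$: Type~1 factors divide $a-1$ and contribute at most $\deg(a)$ to $\deg\mathrm{rad}(a^\ell-1)$, while Type~2 factors satisfy $\ell\mid q^{\deg f}-1$, which forces $s:={\rm ord}_\ell(q)$ to divide $\deg f$. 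Setting $G:=G(a^\ell-1)$ and letting $\omega_2$ denote the number of Type~2 factors, we obtain $\omega_2\,G\geq(\ell-2)\deg(a)$. Combining this with the prime polynomial theorem bound $\omega_2\leq\sum_{1\leq js\leq G}q^{js}/(js)\leq 2q^G/s$ and taking $\log_q$ yields $G\geq(1+\alpha)\log_q\ell-\log_q G+O_{q,a}(1)$. Treating the cases $G\leq\ell^\epsilon$ (where the $\log_q G$ term is absorbed) and $G>\ell^\epsilon$ (where the desired bound is trivial) separately produces the claimed estimate.

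For part (ii), it suffices to show that for any fixed $\delta>0$, the number of primes $\ell\leq x$ with ${\rm ord}_\ell(q)<\ell^{1/2-\delta}$ is $o(x/\log x)$; part (i) with $\alpha=1/2-\delta$, for $\delta$ small in terms of $\epsilon$, then finishes the job. Primes $\ell$ with ${\rm ord}_\ell(q)=s$ divide $q^s-1$, giving at most $\omega(q^s-1)=O(s)$ such primes; summing over $s\leq x^{1/2-\delta}$ yields $O(x^{1-2\delta})=o(x/\log x)$. Part (iii) requires the analogous statement with $1/2-\delta$ replaced by $1-\delta$ for every $\delta>0$. Since ${\rm ord}_\ell(q)=(\ell-1)/k$ is equivalent to $\ell$ splitting completely in $\mathbb{Q}(\zeta_k,q^{1/k})$ (whose degree over $\mathbb{Q}$ is $\asymp k\varphi(k)$), the GRH-effective Chebotarev density theorem bounds the count of such $\ell\leq x$ by $\pi(x)/[\mathbb{Q}(\zeta_k,q^{1/k}):\mathbb{Q}]+O(\sqrt{x}\log(kx))$. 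Following Hooley's treatment of Artin's primitive root conjecture, one splits the range $k>\ell^\delta$ into a Chebotarev regime $k\leq x^{1/2}/(\log x)^2$, where the total contribution is $O(x/(\log x)^2)$ via the convergence of $\sum_k 1/(k\varphi(k))$, and a complementary regime handled by the elementary divisibility estimate from part (ii). The main obstacle will be executing this splitting cleanly so that the combined exceptional count is $o(x/\log x)$ rather than merely $o(x)$.
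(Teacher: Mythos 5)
Your parts (i) and (ii) are correct and follow the paper's route in substance. For (i) you apply Mason--Stothers to $a^\ell+(1-a^\ell)=1$ and bound the number of irreducible factors $f$ with ${\rm o}_f(a)=\ell$ via ${\rm ord}_\ell(q)\mid\deg f$ and the prime polynomial theorem; the paper does exactly this, except that it organizes the Mason step through the power-free/power-full decomposition of $C_1=a-1$ and $C_\ell=(a^\ell-1)/(a-1)$ rather than through the radical directly, and then counts primitive prime divisors the same way. Your elementary proof of the density input for (ii) (at most $\omega(q^s-1)\ll s$ primes with ${\rm ord}_\ell(q)=s$, summed over $s\le x^{1/2-\delta}$) is sound and replaces the paper's citation of Erd\H{o}s--Murty's Theorem~1; this is a legitimate simplification since the $\epsilon$-loss in the statement tolerates the weaker exponent $1/2-\delta$.

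Part (iii) has a genuine gap. Your two-regime split --- GRH--Chebotarev for divisors $k$ of the index $i_q(\ell)=(\ell-1)/{\rm ord}_\ell(q)$ with $k\le x^{1/2}/(\log x)^2$, and ``the elementary divisibility estimate from part (ii)'' for the rest --- does not cover all exceptional primes. Consider $\ell\le x$ whose index is a single prime $r\asymp x^{1/2}$: then ${\rm ord}_\ell(q)\asymp x^{1/2}$, so the elementary count $\sum_{s\le x^{1/2}}\omega(q^s-1)\ll x$ is useless, while the only nontrivial divisor of the index is $r\asymp x^{1/2}$, for which the accumulated Chebotarev error terms $\sum_{k\le x^{1/2}}O(\sqrt{x}\log(kx))$ are of size $x\log x$ (and even at your cutoff $x^{1/2}/(\log x)^2$ the error sum is $O(x/\log x)$, not $o(x/\log x)$). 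Closing this requires the full Hooley-style three-way split: the elementary bound when ${\rm ord}_\ell(q)\le x^{1/2}/(\log x)^3$, Chebotarev for a prime factor $r$ of the index with $x^{\delta/4}<r\le x^{1/2}/(\log x)^3$ (after noting that if the index exceeds $x^{\delta/2}$ but has no divisor in $(x^{\delta/4},x^{\delta/2}]$ it must have a prime factor $>x^{\delta/4}$), and crucially a Brun--Titchmarsh estimate for $\ell\equiv 1\pmod r$ over \emph{prime} moduli $r$ in the short range $(x^{1/2}/(\log x)^3,\,x^{1/2}(\log x)^3]$, which contributes $O(x\log\log x/(\log x)^2)=o(x/\log x)$ precisely because $\sum 1/r$ over primes in that range is $O(\log\log x/\log x)$. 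This middle range is the whole difficulty of the GRH statement; the paper sidesteps it entirely by quoting Erd\H{o}s--Murty's Theorem~4 (Lemma~\ref{order-i}(ii)), which is the cleanest repair of your argument.
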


\begin{remarks}
(i) Following the proof of Part (i) of the above theorem we can show that an assertion similar to Part (i) holds for ${G(a(t)^n-1)}$, as long as integer $n$ belongs to 
$$\{n;~p\nmid n,~{\rm ord}_m(q)\geq m^\alpha~{\rm for~all~}m\mid n~{\rm and}~m>n^{1-\epsilon} \},$$
where $\epsilon>0$ is a fixed constant. 

\noindent (ii) Unlike Theorem \ref{MW}, Parts (i) and (ii) of the above theorem are unconditional. This is due to a known version of the ABC conjecture, due to Mason, for the function fields (see Theorem \ref{mason}). The condition that $a(t)$ is not a perfect $p$-th power is needed for application of Mason's theorem.

\noindent (iii) The above theorem establishes an intimate connection between the growth of  degree of \\${G(a(t)^n-1)}$ in a function field $\mathbb{F}_q[t]$ and the multiplicative order of integer $q$ modulo $n$. This is a common feature in many function field problems that their study ties together with the study of problems in integers. A notable example is the appearance of Romanoff's theorem  in Bilharz's proof of Artin's primitive root conjecture over function fields (see \cite[Chapter 10]{R}).   

\noindent (iv) The function ${\rm ord}_n(q)$ has an erratic behavior, and although most of the times it is large it can take small values too. For example if we assume there are infinitely many Mersenne primes then there are infinitely many primes $\ell$ for which ${\rm ord}_\ell(2)$ is as small as $\log{\ell}$.

\noindent (v) Part (iii) of the above theorem is comparable with Murty-Wong's theorem (Theorem \ref{MW}). However the statement is weaker in the sense that integers are replaced by almost all prime numbers. Also Part (iii) is conditional upon the GRH while Murty-Wong's is conditional upon the ABC conjecture. It is debatable which one of these conjectures is harder than the other.  

\noindent (vi) Note that $G(a(t)^n-1)$  when $n$ is a multiple of $p$ behaves differently, as $G(a(t)^{mp}-1)=G(a(t)^{m}-1).$

\end{remarks}





Our next example of a divisibility sequence is related to elliptic curves.
Let $E$ be an elliptic curve given by the  Weierstrass equation $$y^2 =x^3+A x+B,$$
where $A, B\in \mathbb{Z}$.
Let $E(\mathbb{Q})$ be the group of rational points of $E$.
It is known that any rational point on $E$ has an expression in the form $(a/d^2, b/d^3)$ with $(a,d)=(b,d)=1$ and $d\geq 1$ (see \cite[p. 68]{ST}).   
Let $Q$ be a rational point of infinite order in $E(\mathbb{Q})$. 
The \emph{elliptic denominator sequence} $(d_n)$ associated to $E$ and $Q$ is defined by
$$nQ=\left(\frac{a_n}{d_n^2}, \frac{b_n}{d_n^3}  \right).$$
One can show that $(d_n)$ is a divisibility sequence. 
\begin{example}
\label{Jeff}
Let $E$ be given by $y^2=x^3-11$. Then $Q=(3, 4)$ is a point of infinite order in $E(\mathbb{Q})$. Let $(d_n)$ be the denominator sequence associated to $E$ and $Q$. We record the prime power factorization of $d_n$ for $1\leq n \leq 17$.

{\tiny
\begin{table}[h]    
\centering
\begin{tabular}{ ||l|l| }
\hline
 $n$ & {\rm Factorization of} $d_n$ {\rm associated to} $y^2=x^3-11$ {\rm and} $(3,4)$ \\ 
\hline
$1$ & $1$ \\ 
$2$ & $2^{3}$ \\
$3$ & $3^{2} \cdot 17$ \\ 
$4$ & $2^{4} \cdot 37 \cdot 167$ \\
$5$ & $449 \cdot 104759$ \\ 
$6$ & $2^{3} \cdot 3^{2} \cdot 5 \cdot 17 \cdot 23 \cdot 1737017$ \\
$7$ & $19 \cdot 433 \cdot 2689 \cdot 8819 \cdot 40487$ \\ 
$8$ & $2^{5} \cdot 37 \cdot 167 \cdot 245519 \cdot 3048674017$\\ 
$9$ & $3^{3} \cdot 17 \cdot 861139 \cdot 638022143238323743$\\ 
$10$ & $2^{3} \cdot 29 \cdot 449 \cdot 39631 \cdot 54751 \cdot 104759 \cdot 117839 \cdot 181959391$\\
$11$ & $11 \cdot 331 \cdot 2837 \cdot 4423 \cdot 4621 \cdot 687061 \cdot 40554559 \cdot 105914658299$\\
$12$ & $2^{4} \cdot 3^{2} \cdot 5 \cdot 17 \cdot 23 \cdot 37 \cdot 107 \cdot 167 \cdot 1288981 \cdot 1737017 \cdot 64132297 \cdot 7428319481306593$\\
$13$ & $7 \cdot 31 \cdot 233 \cdot 452017 \cdot 104847601 \cdot 26215872615271 \cdot 403453481668667999145407$\\
$14$ & $2^{3} \cdot 19 \cdot 41 \cdot 211 \cdot 433 \cdot 503 \cdot 2309 \cdot 2689 \cdot 4451 \cdot 8819 \cdot 28813 \cdot 40487 \cdot 42859 \cdot 306809 \cdot 404713 \cdot 909301 \cdot 35196247$\\
$15$ & $3^{2} \cdot 17 \cdot 449 \cdot 631 \cdot 29819 \cdot 104759 \cdot 258659 \cdot 1331521 \cdot 2681990178080401065344970115363369337376832169$\\
$16$ & $2^{6} \cdot 37 \cdot 167 \cdot 431 \cdot 3169 \cdot 49537 \cdot 245519 \cdot 3048674017 \cdot 606437794508831 \cdot 3321240163385870449 \cdot 21659973345967709759$\\
$17$ & $101^{2} \cdot 606899 \cdot 1865887 \cdot 141839057 \cdot 383168404657137063963767 \cdot 199169555888386471211683643669332910982224853163$\\
\hline
\end{tabular}
\end{table}
}

A glance at the above table shows that assertions similar to Observations 1 and 2 for $a^n-1$ may hold for $d_n$. In fact, following an argument similar to the case $a^n-1$, one may speculate that, for any $\epsilon>0$, we have
$$\log{P(d_n)} \gg_{\epsilon, E, Q} n^{2-\epsilon},$$
where the implied constant depends on $E$, $Q$, and $\epsilon$.
\end{example}



We will prove the following conditional lower bound for $P(d_n)$ for certain elliptic curves. 

\begin{theorem}
\label{elliptic-theorem}
Let $E$ be an elliptic curve over $\mathbb{Q}$ of $j$-invariant $0$ or $1728$. For a point of infinite order $Q\in E(\mathbb{Q})$, let $(d_n)$ be the elliptic denominator sequence associated to $E$ and $Q$. Assume Conjecture \ref{ABC-conj}. Then for any $\epsilon>0$ we have
$$P(d_n)\gg_{\epsilon, E, Q} n^{3-\epsilon},$$ 
or equivalently $$\log{P(d_n)}\geq {(3-\epsilon)} \log{n}+O_{\epsilon,E, Q}(1).$$ 
\end{theorem}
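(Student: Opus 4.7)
The plan is to adapt the Murty--Wong argument used for $a^n-1$ (Theorem \ref{MW}) to the elliptic divisibility sequence $d_n$. The cyclotomic factor $\Phi_n(a)$ of $a^n-1$ is replaced by the primitive part $\mathcal{B}_n$ of $d_n$, which collects the contributions of those primes $p$ for which the reduction $\widetilde Q$ has order exactly $n$ in $\widetilde E(\mathbb{F}_p)$, and the congruence $p\equiv1\pmod n$ is replaced by the stronger CM-based constraints that become available when $j(E)\in\{0,1728\}$.

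For $j(E)=0$ we take the Weierstrass model $E\colon y^2=x^3+B$ and substitute $nQ=(a_n/d_n^2,b_n/d_n^3)$ to obtain the three-term identity $b_n^2-a_n^3=B\,d_n^6$; for $j(E)=1728$, with $E\colon y^2=x^3+Ax$, one similarly gets $b_n^2-a_n^3=A\,a_n d_n^4$. In each case, applying Conjecture \ref{ABC-conj} to the (essentially coprime) triple, combined with the estimates $|a_n|\ll_{E,Q}d_n^2$ and $|b_n|\ll_{E,Q}d_n^3$ from canonical-height theory, yields the elliptic Silverman bound
\[
\mathrm{rad}(d_n)\gg_{\epsilon,E,Q}d_n^{1-\epsilon},
\]
and consequently $\mathrm{rad}(\mathcal{B}_n)\gg\mathcal{B}_n^{1-\epsilon}$ since $\mathcal{B}_n\mid d_n$.

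Let $K$ denote the CM field ($K=\mathbb{Q}(\sqrt{-3})$ for $j=0$; $K=\mathbb{Q}(i)$ for $j=1728$). We count the prime divisors of $\mathcal{B}_n$ by splitting according to reduction type. For an ordinary prime $p\mid\mathcal{B}_n$, the Frobenius $\pi_p\in\mathcal{O}_K$ satisfies $N(\pi_p-1)=n$ in the generic case (where $\widetilde Q$ is an $\mathcal{O}_K$-module generator of $\widetilde E(\mathbb{F}_p)$), so the number of such primes is bounded by the representation count $r_K(n)\ll_\epsilon n^\epsilon$. For a supersingular prime $p\mid\mathcal{B}_n$, the requirement $\mathrm{ord}(\widetilde Q)=n$ in the group $\widetilde E(\mathbb{F}_p)$ of order $p+1$ forces $n\mid p+1$, i.e. $p\equiv-1\pmod n$; Brun--Titchmarsh then gives $\ll P(d_n)/(n\log P(d_n))$ such primes up to $P(d_n)$. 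Summing, $\omega(\mathcal{B}_n)\ll_{E,Q}P(d_n)/(n\log P(d_n))$.

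Combining with the height lower bound $\log\mathcal{B}_n\gg_{E,Q}n^2$ (from $\log d_n\sim\widehat h(Q)n^2$ and Silverman's elliptic primitive-divisor theorem) via $\log\mathrm{rad}(\mathcal{B}_n)\le\omega(\mathcal{B}_n)\log P(d_n)$ yields
\[
(1-\epsilon)\,n^2\;\ll\;\omega(\mathcal{B}_n)\log P(d_n)\;\ll\;\frac{P(d_n)}{n},
\]
so $P(d_n)\gg_{\epsilon,E,Q}n^{3-\epsilon}$, as required. The main obstacle is the uniform CM-based count: the non-generic ordinary primes (where $\widetilde Q$ fails to generate $\widetilde E(\mathbb{F}_p)$ as an $\mathcal{O}_K$-module), the primes of bad reduction, and the multiplicative constants associated with $\mathcal{O}_K^{\times}$ must each be shown to contribute at most $O_\epsilon(n^\epsilon)$ to $\omega(\mathcal{B}_n)$, which requires a careful analysis of the $\mathcal{O}_K$-module structure of $\widetilde E(\mathbb{F}_p)$ and of the reductions of $Q$.
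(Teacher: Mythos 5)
Your overall architecture matches the paper's: apply the ABC conjecture to the Weierstrass relation $b_n^2-a_n^3-Bd_n^6=0$ to show the (primitive part of the) denominator is essentially squarefree, combine with $\log D_n\gg_{E,Q} n^2$ and $\log \mathrm{rad}(D_n)\le \omega(D_n)\log P(d_n)$, and then win by bounding the number of primes dividing the primitive part using the CM hypothesis. Your assembly is in fact a streamlined version of the paper's (which runs a sum over all divisors $m\mid n$ with a cutoff parameter $z$), and the ABC step is fine modulo routine bookkeeping. The problem is that the one step that actually uses the CM hypothesis --- the prime count --- is exactly the step you leave open, and your sketch of it is off target. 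The paper closes this with a cited result (Lemma \ref{A-M}, from Akbary--Murty): for a CM curve, $\#\{\pi\le x:\ m\mid n_\pi(E)\}\ll \tau(m_{\rm sp})\,x/m$. Since every good prime $\pi\mid D_n$ satisfies ${\rm o}_\pi(Q)=n$ and hence $n\mid n_\pi(E)$, this single estimate handles ordinary and supersingular primes, generic and non-generic alike, and gives $\omega(D_n)\ll \tau(n)P(d_n)/n$, which is all the argument needs.

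Two concrete defects in your counting sketch. First, in the ordinary case, ${\rm o}_p(Q)=n$ forces $n$ to divide the exponent of $\widetilde E(\mathbb{F}_p)$, hence $n\mid N(\pi_p-1)$; it does not force $N(\pi_p-1)=n$, and the ``non-generic'' primes (non-cyclic group, or $\widetilde Q$ not a generator) are not a small correction --- they occur with the same order of magnitude as the congruence count $x/n$, so they cannot be absorbed into an $O_\epsilon(n^\epsilon)$ term. Second, and conversely, you are demanding more than you need: a bound of the shape $O(\tau(n)\,x/n)$ for the \emph{total} number of primes $p\le x$ with $n\mid \#\widetilde E(\mathbb{F}_p)$ already yields $n^2\ll \tau(n)\,P(d_n)\log P(d_n)/n$ and hence $P(d_n)\gg n^{3-\epsilon}$; the insistence that the exceptional ordinary primes contribute only $O_\epsilon(n^\epsilon)$ is both unnecessary and almost certainly unprovable. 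As written, the proposal has a genuine gap precisely where the CM input must be made quantitative; replacing your case analysis by the uniform divisibility count of Lemma \ref{A-M} (or an equivalent ideal-counting argument in $\mathcal{O}_K$, which is how that lemma is proved) repairs it.
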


Some authors call the above sequence $(d_n)$ an {elliptic divisibility sequence}. We decided to call them elliptic denominator sequences to differentiate them from the classical elliptic divisibility sequences defined and studied by Ward \cite{W}.  A divisibility sequence $(w_n)$ is called an \emph{elliptic divisibility sequence} if $w_1=1$ and, for $n>m$, $(w_n)$ satisfies the recurrence
$$w_{n+m}w_{n-m}=w_{n+1}w_{n-1}w_m^2-w_{m+1}w_{m-1}w_n^2.$$
The discriminant ${\rm Disc}(w)$ of an elliptic divisibility sequence $(w_n)$ is defined by the formula
\begin{multline*}
{\rm Disc}(w)=w_4 w_2^{15}-w_3^3w_2^{12}+3w_4^2 w_2^{10}-20 w_4 w_3^3 w_2^7
+3w_4^3 w_2^5+16w_3^6 w_2^4+8w_4^2 w_3^3w_2^2+w_4^4.
\end{multline*}
An elliptic divisibility sequence is called \emph{non-singular} if $w_2w_3{\rm Disc}(w)\neq 0$. There is a close connection between non-singular elliptic divisibility sequences and elliptic curves. More precisely a theorem of Ward states that for any non-singular elliptic divisibility sequence $(w_n)$, there exist an elliptic curve $E$ and a point $Q\in E(\mathbb{Q})$ such that $(w_n)$ can be realized as the values of certain elliptic functions on $E$ evaluated at $Q$ (see \cite[Theorems 12.1 and 19.1]{W}). Moreover $E$ and $Q$ can be explicitly constructed in terms of $w_2$, $w_3$, and $w_4$ (see \cite[Appendix A]{SS}). We call the pair $(E,Q)$ given in
\cite[Appendix A]{SS}, the \emph{curve point} associated to $(w_n)$. In addition if $(d_n)$ is the denominator sequence associated to $E$ and $Q$, we can show that $d_n\mid w_n$.
As an immediate consequence of Theorem \ref{elliptic-theorem} and relation $d_n\mid w_n$ we have the following result.

\begin{cor}
\label{elliptic-corollary}
Let $(w_n)$ be a non-singular elliptic divisibility sequence with the associated curve point $(E,Q)$.  Suppose that $E$ has $j$-invariant $0$ or $1728$ and $Q$ has infinite order. Then under the assumption of Conjecture \ref{ABC-conj} we have
$$P(w_n)\gg_{\epsilon, E, Q} n^{3-\epsilon},$$ 
for any $\epsilon>0$, or equivalently $$\log{P(w_n)}\geq {(3-\epsilon)} \log{n}+O_{\epsilon,E, Q}(1).$$ 
\end{cor}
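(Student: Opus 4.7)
The plan is to deduce the bound for $P(w_n)$ from Theorem \ref{elliptic-theorem} applied to the associated elliptic denominator sequence $(d_n)$, using the divisibility $d_n\mid w_n$ recorded immediately before the corollary. Almost nothing in the argument is specific to the elliptic divisibility sequence setting; the heavy lifting is already packaged inside Theorem \ref{elliptic-theorem}.

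First I would invoke the construction of \cite[Appendix A]{SS}, which, given the data $w_2, w_3, w_4$ of the non-singular elliptic divisibility sequence $(w_n)$, explicitly produces the associated curve point $(E,Q)$. By hypothesis $E$ has $j$-invariant $0$ or $1728$ and $Q\in E(\mathbb{Q})$ has infinite order, so the pair $(E,Q)$ meets the hypotheses of Theorem \ref{elliptic-theorem}. Let $(d_n)$ denote the elliptic denominator sequence attached to $E$ and $Q$. Assuming Conjecture \ref{ABC-conj}, Theorem \ref{elliptic-theorem} then yields
$$P(d_n) \gg_{\epsilon, E, Q} n^{3-\epsilon}$$
for every $\epsilon>0$.

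Second, since $d_n\mid w_n$, every prime divisor of $d_n$ is also a prime divisor of $w_n$, and consequently $P(d_n)\leq P(w_n)$. Chaining the two inequalities gives $P(w_n)\gg_{\epsilon, E, Q} n^{3-\epsilon}$, which upon taking logarithms is the desired assertion.

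At the level of the corollary, the main obstacle is therefore entirely delegated to Theorem \ref{elliptic-theorem} (and through it to the ABC conjecture). The only ingredient beyond citing that theorem is the divisibility $d_n\mid w_n$, which is a standard compatibility between Ward's recursion and the division polynomials of $E$: once the curve point $(E,Q)$ is normalized as in \cite[Appendix A]{SS}, the denominators of the coordinates of $nQ$ divide $w_n$ because the two sequences satisfy the same recurrence and matching low-index values up to the explicit scaling built into the construction.
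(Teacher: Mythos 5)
Your proposal is correct and follows exactly the route the paper intends: the corollary is stated there as an immediate consequence of Theorem \ref{elliptic-theorem} together with the divisibility $d_n\mid w_n$ for the curve point $(E,Q)$ from \cite[Appendix A]{SS}, which is precisely your two-step argument via $P(d_n)\leq P(w_n)$. No gaps.
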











In Section \ref{section2} we prove Theorem \ref{MW-functionfield}.  The proof of Theorem \ref{elliptic-theorem} is given in Section \ref{section3}.

\begin{notation}
Throughout the paper $p$ and $\ell$ denote primes, $q=p^r$, $\mathbb{F}_q$ is the finite field of $q$ elements, $\mathbb{F}_q[t]$ and $\mathbb{F}_q(t)$ are the ring of polynomials and the function field with coefficients in $\mathbb{F}_q$. We let ${\rm ord}_p (a)$ be the multiplicative order of integer $a$ modulo $p$. 
The letter $\pi$ denotes either a rational prime or a monic irreducible polynomial in $\mathbb{F}_q$ (for simplicity we write $\pi(t)$ as $\pi$ in this case). For a polynomial $b(t)\in \mathbb{F}_q[t]$, we let $G(b(t))$ 
be the greatest of the degrees of the irreducible factors of $b(t)$. For a monic irreducible polynomial $\pi$ and a polynomial $a(t)\in \mathbb{F}_q[t]$ we let ${\rm o}_\pi(a)$ be the multiplicative order of $a(t)$ modulo $\pi$.
For an elliptic curve $E$ defined over $\mathbb{Q}$ and a good prime $\pi$, we denote the number of points of reduction modulo $\pi$ of $E$ by $n_\pi(E)$. We denote the group of $\mathbb{Q}$-rational points of $E$ by $E(\mathbb{Q})$ and the discriminant of $E$ by $\Delta_E$. 
We let ${\rm o}_\pi(Q)$ be the order of the point $Q\in E(\mathbb{Q})$ modulo $\pi$. We denote the elliptic denominator sequence associated to $E$ and $Q$ by $(d_n)$ and we let $D_n$ be  the primitive divisor of $d_n$.
The functions $\tau(n)$, $\omega(n)$, and $P(n)$ are the divisor function, the number of distinct prime divisors function, and the greatest prime factor function.  
For two real functions $f(x)$ and $g(x)\neq 0$, we use the notation $f(x)=O_s(g(x))$, or alternatively $f(x)\ll_s g(x)$, if $|f(x)/g(x)|$ is  bounded by a constant, depending on a parameter $s$, as $x\rightarrow \infty$. Finally we write $f(x)=o(g(x))$ if $\lim_{x\rightarrow \infty} f(x)/g(x)=0$. 
\end{notation}


\section{Polynomial Case}
\label{section2}
In this section we assume that $a$, $b$, $c$ are polynomials in $\mathbb{F}_q[t]$, where $\mathbb{F}_q$ is the finite field of $q=p^r$ elements. (For simplicity from now on we drop the variable $t$ in our notation for polynomials.) We denote a monic irreducible polynomial by $\pi$, and we call such a polynomial a \emph{prime polynomial}. We need the next three lemmas in the proof of Theorem \ref{MW-functionfield}.

The following assertion, which is analogous to the $ABC$ Conjecture, holds in $\mathbb{F}_q(t)$.

\begin{lemma}[Mason]
\label{mason}
Let $a$, $b$, $c\in \mathbb{F}_q[t]$ be  relatively prime polynomials that are not all perfect $p$-th power. If $$c=a+b,$$
then
$$\max\{{\rm deg}(a), {\rm deg}(b), {\rm deg}(c)\} \leq {\rm deg}\left(\prod_{\pi \mid abc} \pi\right)-1.$$
\end{lemma}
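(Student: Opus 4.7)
The plan is to adapt the classical Mason--Stothers proof, via a Wronskian argument, with the $p$-th power hypothesis used exactly at the point where one must exclude vanishing of the Wronskian.

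First I would set $W = ab' - a'b$ and use the relation $c = a+b$ to rewrite $W$ in three symmetric ways. A direct computation using $c' = a' + b'$ gives
\[
W \;=\; ab' - a'b \;=\; ac' - a'c \;=\; -(bc' - b'c),
\]
so the same Wronskian controls all three pairs from $\{a,b,c\}$. The next step is to verify that $W \neq 0$ under the stated hypothesis. If $W = 0$, then $a'b = ab'$; since $\gcd(a,b) = 1$, one gets $a \mid a'$, which (as $\deg a' < \deg a$) forces $a' = 0$, meaning $a$ is a $p$-th power in $\mathbb{F}_q[t]$. The same applies to $b$, and then $c = a+b$ is a $p$-th power by Frobenius, contradicting the assumption.

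The core step is a local divisibility claim: writing $R = \prod_{\pi \mid abc} \pi$, one has $abc/R \mid W$. To see this, fix a prime polynomial $\pi$, and note that since $c = a + b$ and $\gcd(a,b) = 1$, at most one of $a,b,c$ is divisible by $\pi$. If $\pi^e \exdiv a$, then $\pi^{e-1} \mid a'$ (this holds in any characteristic, even if $p \mid e$, because then in fact $\pi^e \mid a'$), and $\pi^e \mid a$, so $\pi^{e-1} \mid ab' - a'b = W$. Since $\pi$ occurs in $abc/R$ with exponent exactly $e - 1$, this gives the claim for $\pi \mid a$. The cases $\pi \mid b$ and $\pi \mid c$ are handled identically using the other two expressions for $W$.

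Since $W \neq 0$, the divisibility $abc/R \mid W$ yields the degree inequality
\[
\degg(a) + \degg(b) + \degg(c) - \degg(R) \;=\; \degg(abc/R) \;\leq\; \degg(W).
\]
Combined with the trivial bounds $\degg(W) \leq \degg(a) + \degg(b) - 1$, and the analogous ones from $W = ac' - a'c$ and $W = b'c - bc'$, one obtains $\degg(c) \leq \degg(R) - 1$, $\degg(b) \leq \degg(R) - 1$, and $\degg(a) \leq \degg(R) - 1$ respectively, which together give the stated maximum bound. The only step demanding care is the ruling out of $W = 0$: this is where the hypothesis that $a,b,c$ are not all $p$-th powers is used, and it is precisely the subtlety absent in characteristic zero.
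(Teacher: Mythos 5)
Your proof is correct and complete. The paper itself gives no argument for this lemma --- it simply cites Mason's book --- so there is no internal proof to compare against; what you have written is the standard Mason--Stothers Wronskian argument, which is essentially the proof the citation points to. All the delicate points are handled properly: the three expressions $W=ab'-a'b=ac'-a'c=-(bc'-b'c)$ are verified using $c'=a'+b'$; the non-vanishing of $W$ is correctly reduced, via $\gcd(a,b)=1$ and the degree drop under differentiation, to $a'=b'=0$, and you correctly observe that in $\mathbb{F}_q[t]$ (Frobenius being surjective on $\mathbb{F}_q$) this forces $a$ and $b$, hence $c=a+b$, to be $p$-th powers, contradicting the hypothesis --- this is exactly where the ``not all perfect $p$-th powers'' assumption enters and is the one genuinely characteristic-$p$ issue. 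The local estimate $\pi^{e-1}\mid a'$ when $\pi^e$ exactly divides $a$ is right in all characteristics (including the case $p\mid e$, as you note), pairwise coprimality follows from $\gcd(a,b,c)=1$ together with $c=a+b$, and the final degree bookkeeping $\deg(abc)-\deg R\leq\deg W\leq\deg a+\deg b-1$ (and its two symmetric variants) delivers all three inequalities needed for the maximum. Nothing is missing.
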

\begin{proof}
See \cite[p. 156]{M}.
\end{proof}

We let $C_1=a-1$ and  $C_\ell=(a^\ell -1)/{(a-1)}$, where $\ell$ is a prime integer. For a prime polynomial $\pi$, we denote the multiplicative order of $a$ mod $\pi$ by ${\rm o}_\pi(a)$. Then for a  $\pi$, where $\pi \nmid a$, and a prime $\ell\neq p$, we have 
\begin{equation}
\label{order-relation}
{\rm o}_\pi(a)=\ell \iff \pi \mid C_\ell.
\end{equation}

The first part of the next lemma can be considered as an analogue of the prime number theorem in function fields. 
 
\begin{lemma}
\label{PNT}
Let $\pi$ denote a prime in $\mathbb{F}_q(t)$. 

\noindent (i) For positive integer $k$, we have
$$\#\{\pi;~{\rm deg}(\pi)=k\}=\frac{q^k}{k}+O\left(\frac{q^{\frac{k}{2}}}{k}  \right).$$
(ii) For positive integers $k$ and $N$ we have 
$$\#\{\pi;~{\rm deg}(\pi)\leq N~{\rm and}~k\mid \degg(\pi) \} \ll \frac{q^N}{k}.$$
\end{lemma}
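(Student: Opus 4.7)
The plan is to prove both parts by the classical argument based on counting elements of $\mathbb{F}_{q^k}$ and applying Möbius inversion.

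For part (i), I would start from the identity
$$q^k = \sum_{d \mid k} d\,\pi_d,$$
where $\pi_d := \#\{\pi : \deg(\pi) = d\}$. This identity comes from the fact that $\mathbb{F}_{q^k}$ consists of exactly those elements whose minimal polynomial over $\mathbb{F}_q$ has degree $d$ dividing $k$, and each monic irreducible polynomial of degree $d$ contributes $d$ distinct roots in $\mathbb{F}_{q^k}$. Möbius inversion then yields
$$\pi_k = \frac{1}{k} \sum_{d \mid k} \mu(k/d)\, q^d = \frac{q^k}{k} + \frac{1}{k}\sum_{\substack{d \mid k \\ d < k}} \mu(k/d) q^d.$$
Since every proper divisor $d$ of $k$ satisfies $d \leq k/2$, the error sum is bounded in absolute value by
$$\frac{1}{k}\sum_{d=1}^{\lfloor k/2 \rfloor} q^d \;\leq\; \frac{q^{k/2+1}}{k(q-1)} \;=\; O\!\left(\frac{q^{k/2}}{k}\right),$$
which gives the claimed asymptotic.

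For part (ii), I would simply sum the estimate from part (i) over the degrees $jk$ with $1 \leq j \leq M := \lfloor N/k \rfloor$. From part (i) (or directly from $m\,\pi_m \leq q^m$) we have $\pi_{jk} \leq q^{jk}/(jk)$, so
$$\#\{\pi : \deg(\pi) \leq N,\; k \mid \deg(\pi)\} \;=\; \sum_{j=1}^{M} \pi_{jk} \;\leq\; \frac{1}{k}\sum_{j=1}^{M} \frac{q^{jk}}{j}.$$
The inner sum is a geometric-type sum dominated by its last term: bounding $\sum_{j=1}^{M-1} q^{jk}/j \leq \sum_{j=1}^{M-1} q^{jk} \leq q^{Mk}/(q^k - 1) \leq q^{Mk}$, we obtain $\sum_{j=1}^{M} q^{jk}/j \ll q^{Mk} \leq q^N$, which yields the bound $\ll q^N/k$.

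Neither part presents a substantial obstacle; the argument is entirely standard. The only point that requires slight care is in part (ii), where one must verify that the geometric series $\sum_j q^{jk}/j$ really is dominated by its largest term (so that the resulting bound is $q^N/k$ rather than, say, $q^N \log(N/k)/k$), but since the ratio of consecutive terms is at least $q^k \geq 2$, this is automatic.
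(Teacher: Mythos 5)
Your proof is correct and takes essentially the same route as the paper: for part (i) the paper simply cites Rosen's Theorem 2.2 (the prime polynomial theorem), whose standard proof is precisely your M\"obius-inversion argument, and for part (ii) the paper likewise sums the part (i) bound over degrees $jk\leq N$ and controls the resulting geometric series $\frac{1}{k}(q^k+q^{2k}+\cdots+q^{[N/k]k})\ll q^N/k$. Your extra remark that the series is dominated by its last term because the common ratio is $q^k\geq 2$ is exactly the point implicit in the paper's estimate.
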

\begin{proof}
(i) See \cite[Theorem 2.2]{R}.

\noindent (ii) From Part (i) we have
$$\sum_{\substack{{\pi;~ k\mid \degg(\pi)}\\{\degg(\pi)\leq N}}} 1 \ll  \frac{1}{k} \left( q^k+q^{2k}+\cdots+q^{[N/k]k} \right)\ll  \frac{ q^N}{k}.$$
\end{proof}

Recall that for an integer $m$ we denote the multiplicative order of $m$ modulo  $\ell$ by ${\rm ord}_\ell (m)$. The next lemma provides information on the size of ${\rm ord}_\ell (m)$. 

\begin{lemma}[{Erd\H{o}s-Murty}]
\label{order-i}
Let $m\in \mathbb{Z}\setminus \{0,\pm 1\}$.  Then we have the following statements.

(i)  Let $\epsilon: \mathbb{R}^+\rightarrow \mathbb{R}^+$  be a function such that $\epsilon(x) \rightarrow 0$ as $x \rightarrow \infty$. 
Then
$${\rm ord}_\ell(m)\geq \ell^{1/2+\epsilon(\ell)}$$
for all but $o(x/\log{x})$ primes $\ell\leq x$.

(ii) Let $f:\mathbb{R}^+\rightarrow \mathbb{R}^+$ be a function such that $f(x) \rightarrow \infty$ as $x\rightarrow \infty.$ For each integer $d\geq 1$ we assume that the generalized Riemann hypothesis (GRH) holds for the Dedekind zeta function of $\mathbb{Q}(\zeta_d, m^{1/d}),$ where $\zeta_d$ denote a primitive $d$-th root of unity. Then for all but $o(x/\log{x})$ primes $\ell \leq x$, we have $${\rm ord}_\ell(m)\geq \frac{\ell}{f(\ell)}.$$
\end{lemma}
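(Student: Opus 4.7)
The plan is to estimate the size of the exceptional set of primes $\ell \leq x$ for which ${\rm ord}_\ell(m)$ falls below the desired threshold. Both parts rest on the same starting observation: if ${\rm ord}_\ell(m) = d$, then $\ell$ divides $\Phi_d(m)$, the $d$-th cyclotomic polynomial evaluated at $m$, and $\ell \equiv 1 \pmod{d}$. Equivalently, $\ell$ splits completely in the Kummer extension $L_k = \mathbb{Q}(\zeta_k, m^{1/k})$, where $k = (\ell-1)/d$.

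For Part (i), I would combine two bounds on $\#\{\ell \leq x : {\rm ord}_\ell(m) = d\}$: the trivial divisor bound $\omega(\Phi_d(m)) \leq \phi(d)\log m/\log(d+1)$ (efficient for small $d$), and the Brun--Titchmarsh inequality $\pi(x;d,1) \ll x/(\phi(d)\log(x/d))$ (efficient for $d$ not too small relative to $x$). Primes $\ell \leq \sqrt{x}$ contribute trivially $O(\sqrt{x}/\log x) = o(x/\log x)$, so I would focus on $\ell \in (\sqrt{x}, x]$, for which ${\rm ord}_\ell(m) < x^{1/2+\epsilon(x)}$. A dyadic decomposition on the $\ell$-range, taking the sharper of the two bounds on each piece, should yield the required $o(x/\log x)$ estimate; the freedom to let $\epsilon(\ell) \to 0$ is used to absorb polylogarithmic losses at the cross-over $d \asymp \sqrt{\ell}$.

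For Part (ii), I would apply the Lagarias--Odlyzko effective form of Chebotarev's density theorem to each $L_k$: under GRH,
\[
\pi_{L_k}(x) = \frac{\pi(x)}{[L_k:\mathbb{Q}]} + O\!\left(\sqrt{x}\bigl(\log |\Delta_{L_k}| + [L_k:\mathbb{Q}]\log x\bigr)\right).
\]
The exceptional primes (those with ${\rm ord}_\ell(m) < \ell/f(\ell)$) split completely in $L_k$ for some $k > f(\ell)$. Using the generic estimates $[L_k:\mathbb{Q}] \gg k\phi(k)$ and $\log|\Delta_{L_k}| \ll [L_k:\mathbb{Q}]\log(km)$, valid for all $k$ outside a finite set depending only on $m$, the main-term contribution sums to at most $\pi(x)\sum_{k > f(x/2)}1/(k\phi(k)) \ll \pi(x)/f(x)$, which is $o(x/\log x)$ since $f(x) \to \infty$. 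A dyadic argument on $\ell$ and a careful restriction of the $k$-range handle the Chebotarev error terms, which can be made negligible by taking $f$ to grow slowly enough.

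The main technical obstacle in Part (i) is the delicate balance of the divisor and sieve bounds at the regime $d \asymp \sqrt{\ell}$, where both bounds are simultaneously tight; this explains why the hypothesis $\epsilon(\ell) \to 0$ (rather than just $\epsilon(\ell)$ small and fixed) is needed. In Part (ii), the principal difficulty is in aggregating the Chebotarev error terms: naive term-by-term summation over $k$ fails because $|\Delta_{L_k}|$ grows rapidly in $k$, and one must exploit the convergence of $\sum_k 1/(k\phi(k))$ together with the restriction $k \leq x$ to control them.
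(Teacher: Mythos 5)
First, a point of reference: the paper does not prove this lemma at all --- its ``proof'' is the single line ``These are Theorems 1 and 4 in [EM]'', so your sketch can only be measured against the actual Erd\H{o}s--Murty arguments, not against anything in this paper.

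Your plan for Part (i) has a genuine gap at the crossover, and it is not the ``polylogarithmic loss'' you describe. Consider the single dyadic block $d \in (x^{1/2}, 2x^{1/2}]$. The divisor bound gives, for the primes $\ell > x^{1/2}$ (the only ones that matter), at most $d\log|m|/\log x \ll x^{1/2}/\log x$ primes per $d$, hence $\asymp x/\log x$ for the block; Brun--Titchmarsh gives $\sum_{d\sim x^{1/2}} x/(\phi(d)\log(x/d)) \asymp (x/\log x)\sum_{d \sim x^{1/2}} 1/\phi(d) \asymp x/\log x$ as well, since a dyadic block of $\sum 1/\phi(d)$ contributes a constant. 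So the minimum of your two bounds already yields $\asymp \pi(x)$ from one block sitting exactly at $d \asymp \sqrt{x}$, and no choice of $\epsilon(\ell)\to 0$ removes that block --- shrinking $\epsilon$ only shortens the range above $x^{1/2}$, it does not touch the range at $x^{1/2}$ itself. Your scheme therefore proves only that the exceptional set is $O(\pi(x))$ (indeed $O(\epsilon^* x)$ over the whole middle range), which is vacuous; it cannot give $o(x/\log x)$. The missing ingredient is a genuinely different input for $d$ in a multiplicatively short window around $\sqrt{\ell}$: one must exploit that the exceptional $\ell$ have the \emph{arithmetic} property that $\ell-1$ possesses a divisor in $(\ell^{1/2-\delta},\ell^{1/2+\epsilon(\ell)})$, and invoke the Erd\H{o}s-style ``propinquity of divisors'' estimates for shifted primes, under which the proportion of such $\ell$ tends to $0$ as the window shrinks. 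Without something of this kind, Part (i) does not close.

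Part (ii) is the right shape (Hooley's method: exceptional $\ell$ split completely in $L_k$ for some $k>f(\ell)$, GRH--Chebotarev for small $k$), but the phrase ``a careful restriction of the $k$-range'' conceals the actual difficulty. For $k$ near $\sqrt{x}$ the Chebotarev error term $\sqrt{x}\,[L_k:\mathbb{Q}]\log(kx)$ swamps the main term, and Brun--Titchmarsh summed over \emph{all integers} $k$ in $(\sqrt{x}/\log^{2}x,\ \sqrt{x}\log x]$ gives $\asymp (x/\log x)\log\log x$, again too large. The standard repair --- which you need to state --- is Hooley's reduction: if $k=(\ell-1)/\mathrm{ord}_\ell(m)$ is large, then either $k$ has a large prime factor $q$ (and one applies Chebotarev/Brun--Titchmarsh to the fields $L_q$ with $q$ \emph{prime}, for which $\sum_q 1/\phi(q)$ over the troublesome range is $O(\log\log x/\log x)$ rather than $O(\log\log x)$), or $k$ has a divisor in a short interval just above $f(\ell)$, where GRH--Chebotarev is safe. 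With that reduction Part (ii) does go through; as written, it does not.
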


\begin{proof}
These are Theorems 1 and 4 in \cite{EM}.
\end{proof}

We are ready to prove Theorem \ref{MW-functionfield}.

\begin{proof}[Proof of Theorem \ref{MW-functionfield}]
Let $\ell$ be a prime different from $p$. We start by setting $C_m=U_m V_m$, for $m=1$ and $\ell$, where $U_m$ is the power-free part of $C_m$ and $V_m$ is the power-full part of $C_m$. (Recall that $C_1=a-1$ and $C_\ell=(a^\ell-1)/(a-1)$.) Observe that $(a^\ell-1)+1=a^\ell$. Thus, since $a$ is not a perfect $p$-th power, by Lemma \ref{mason} we have
\begin{eqnarray*}
\max\{\deg{(a^\ell-1)}, \deg{((a^\ell))}\}&\leq& \deg \left(\prod_{\pi \mid a C_1 C_\ell} \pi \right)-1\\
&\leq& \sum_{\pi | aC_1 C_\ell} \deg(\pi) -1\\
&\leq& \deg(a)+\sum_{m\mid \ell}\left(\deg(U_m)+{\frac{\deg(V_m)}{2}}
\right)-1
\end{eqnarray*}
Employing $\max\{\deg{(a^\ell-1)}, \deg{(a^\ell)}\} = \deg{(a^\ell-1)}$ and $\sum_{m\mid \ell}\deg(U_m V_m)=\deg(a^\ell-1)$ in the above inequality yields $$\sum_{m\mid\ell}\deg{(V_m)} \leq 2 \deg{(a)}-2.$$
Thus we have $\sum_{m\mid \ell}\deg(U_m) \gg_a \ell$, where the implied constant depends on $a$.

From here, we have
\begin{eqnarray}
\label{ff-main-1}
\nonumber
\ell &\ll_{a}&1+ \deg{(U_\ell)} \\
&\ll_{a}& 1+\sum_{\substack{{\pi\mid U_\ell}\\{\deg(\pi)\leq G(a^\ell -1)}}} \deg{(\pi)}. 
\end{eqnarray}

From \eqref{order-relation} we know that for prime $\ell \neq p$ if $\pi\mid U_\ell$ and $\pi\nmid a$ then $\ell= {\rm o}_\pi(a)$. On the other hand we know that $ {\rm o}_\pi(a) \mid \#\left(\mathbb{F}_q[t]/\langle \pi \rangle \right)^{\times}=q^{\degg(\pi)}-1.$ 
Therefore for such prime divisor $\pi$ of $U_\ell$ we have
$$q^{\degg(\pi)} \equiv 1~({\rm mod}~\ell) \Rightarrow {\rm ord}_\ell(q) \mid \deg{(\pi)}.$$ 
Now from Part (ii) of Lemma \ref{PNT} we conclude that
$$\sum_{\substack{
{\pi\mid U_\ell,~\pi\nmid a}\\{\deg{(\pi)}\leq G(a^\ell -1)}
}}
1\ll \frac{q^{G(a^\ell -1)}}{{\rm ord}_\ell(q)}.$$
Applying the latter inequality in \eqref{ff-main-1}, under the assumption of ${\rm ord}_\ell(q)\geq \ell^{\alpha}$ yields  
\begin{equation*}
\label{main2}
\ell \ll_{a}  1+\frac{G(a^\ell -1)}{\ell^\alpha} q^{G(a^\ell -1)}.
\end{equation*}
From here (i) follows. For (ii) it is enough to observe that, by Part (i) of Lemma \ref{order-i}, the set of primes $\ell$ with ${\rm ord}_\ell(q)\geq \ell^{1/2}$ has density one. For (iii) we note that, by part (ii) of Lemma \ref{order-i}, under the assumption of GRH, the set of primes $\ell$ with ${\rm ord}_\ell(q)\geq \ell/\log{\ell}$ has density one.
\end{proof}

\section{Elliptic Curve Case}
\label{section3}
We review some properties of elliptic denominator sequences associated to elliptic curves and rational points on them. Let
$E$ be an elliptic curve defined over $\mathbb{Q}$.  We assume that $E$ is given by a Weierstrass equation whose coefficients are integers. We denote the discriminant of $E$ by $\Delta_E$. 
Let $Q$ be a point of infinite order in $E(\mathbb{Q})$, and $\mathscr{O}$ denote the point at infinity.
For a prime  $\pi\nmid \Delta_E$  in $\mathbb{Q}$,  let ${\rm o}_\pi(Q)$ denote the order of the point $Q$ modulo  $\pi$. In other words ${\rm o}_\pi(Q)$ is the smallest integer $m\geq 1$ such that $mQ\equiv \mathscr{O}$ (mod $\pi$). Let $n_\pi(E)$ be the number of points of reduction modulo $\pi$ of $E$ over the finite field $\mathbb{F}_\pi$. 

Recall that the elliptic denominator sequence $(d_n)$ associated to $E$ and a non-torsion point $Q\in E(\mathbb{Q})$ is defined by
$$nQ=(a_n /d_n^2, b_n/d_n^3).$$

Let $D_n$ be the largest divisor of $d_n$ which is relatively prime to $d_1 d_2 \cdots d_{n-1}$.  $D_n$ is called the \emph{primitive divisor} of $d_n$. It is clear that for a prime $\pi$ of good reduction 
\begin{equation}
\label{order}
{\rm o}_\pi(Q)=n \iff \pi\mid D_n.
\end{equation}
Let $D_n=U_n V_n$ be the decomposition of the {primitive divisor} $D_n$ of $d_n$ to the power-free part $U_n$ and power-full part $V_n$. 
The following lemma summarizes some basic properties of the sequence $(d_n)$.
\begin{lemma}
\label{d_n and D_n}
\begin{enumerate}
\item With the above notation, we have 
$$\prod_{\substack{{\pi\mid d_n}\\{\pi\nmid \Delta_E}}}\pi=\prod_{\substack{{\pi\mid \left(\prod_{m\mid n} D_m\right)}\\{\pi \nmid \Delta_E}}} \pi.$$
\item $\left(\prod _{m | n} D_m \right)\mid d_n$.
\item If $\pi \nmid  \Delta_E$ and $\pi\mid U_n$, then ${\rm o}_\pi(Q)=n$.
\end{enumerate}
\end{lemma}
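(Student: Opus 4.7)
The plan is to dispatch the three parts in reverse order of conceptual difficulty, with the bulk of the work going into a clean characterization of which good primes divide $d_n$.

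Part (c) will be an immediate consequence of equation \eqref{order}. Since $U_n$ divides $D_n$ by definition, any prime $\pi\mid U_n$ with $\pi\nmid\Delta_E$ satisfies $\pi\mid D_n$, and then \eqref{order} forces $\mathrm{o}_\pi(Q)=n$. Nothing more is needed.

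For Part (b), the strategy is to show that the primitive divisors $D_m$, as $m$ ranges over divisors of $n$, are pairwise coprime, each divides $d_n$, and then take the product. Pairwise coprimality is baked into the definition of $D_m$: it is the largest divisor of $d_m$ coprime to $d_1d_2\cdots d_{m-1}$, so for divisors $m<m'$ of $n$, the factor $D_m$ divides $d_m$, which occurs among the factors of $d_1\cdots d_{m'-1}$; since $D_{m'}$ is coprime to that product, $\gcd(D_m,D_{m'})=1$. That each $D_m$ divides $d_n$ follows from $D_m\mid d_m$ together with the divisibility property $d_m\mid d_n$ (which is itself standard for elliptic denominator sequences, coming from the fact that $mQ=\mathscr{O}$ in the reduction whenever $nQ=\mathscr{O}$ there and $m\mid n$). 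Since $d_n$ is divisible by each of the pairwise coprime quantities $D_m$, it is divisible by their product.

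Part (a) is the main point, and will be proved by showing that both sides share the same set of prime divisors outside of $\Delta_E$. The key input is that for a prime $\pi$ of good reduction, $\pi\mid d_n$ is equivalent to $nQ\equiv\mathscr{O}\pmod\pi$, i.e.\ to $\mathrm{o}_\pi(Q)\mid n$. This is the standard interpretation of the denominator in terms of reduction modulo $\pi$: writing $nQ=(a_n/d_n^2,b_n/d_n^3)$ in projective coordinates $[a_nd_n:b_n:d_n^3]$, the reduction hits $\mathscr{O}=[0:1:0]$ precisely when $\pi\mid d_n$. On the other side, for such a good $\pi$, we have $\pi\mid\prod_{m\mid n}D_m$ iff $\pi\mid D_m$ for some $m\mid n$, which by \eqref{order} holds iff $\mathrm{o}_\pi(Q)=m$ for some $m\mid n$, which is the same as $\mathrm{o}_\pi(Q)\mid n$. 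The two descriptions coincide, so the squarefree radicals of the two sides agree away from $\Delta_E$.

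The one step that requires appeal to theory rather than formal manipulation is the equivalence $\pi\mid d_n\iff nQ\equiv\mathscr{O}\pmod\pi$ for good primes, which underpins both Parts (a) and (b); once this is in hand, everything else reduces to combinatorial bookkeeping with the definition of $D_m$ and the already-established divisibility properties of $(d_n)$.
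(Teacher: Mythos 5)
Your proposal is correct and follows essentially the same route as the paper: part (c) from \eqref{order} via $U_n\mid D_n$, part (b) from $D_m\mid d_m\mid d_n$ plus pairwise coprimality of the $D_m$, and part (a) by matching the good primes dividing both sides through the characterization $\pi\mid d_n \iff {\rm o}_\pi(Q)\mid n \iff \pi\mid D_{{\rm o}_\pi(Q)}$. The only cosmetic difference is that the paper handles one direction of (a) directly via the divisibility-sequence property rather than through the order characterization.
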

\begin{proof}
(a) We want to prove that for any integer $n$, a prime 
$\pi$ divides $d_n$ if and only if $\pi \mid D_m$ for some $m \mid n$. Let $m\mid n$ and assume that $\pi \mid D_m$. Then $\pi \mid d_m$ which implies $\pi \mid d_n$ since $(d_n)$ is a divisibility sequence. Conversely, assume that $\pi\nmid \Delta_E$ and $\pi \mid d_n$. Then $nQ \equiv \mathscr{O} \pmod \pi$. Hence ${\rm o}_\pi(Q) \mid n$, which implies that $\pi \mid D_{{\rm o}_\pi(Q)}$. This proves the desired result. 

(b) Note that for every, $m \mid n$, we have that $D_m \mid d_m \mid d_n$. Therefore $\lcm(D_m)_{m\mid n} \mid d_n$. However, by construction, $D_m$'s are relatively prime to each other. Therefore $\lcm (D_m)_{m\mid n} = \prod_{m|n} D_m$.

(c) This is clear from \eqref{order}, since $\pi \mid U_n \mid D_n$. 
\end{proof}

We need two more lemmas before proving our result for elliptic curves.

\begin{lemma}
\label{size}
We have
\begin{equation*}
n^2 \ll_{E, Q} \log{d_n} \ll_{E, Q} n^2,
\end{equation*}
and moreover $$\log{D_n} \gg_{E, Q} n^2.$$
\end{lemma}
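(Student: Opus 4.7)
The proof rests on the theory of heights on elliptic curves. Write $h(P) = \log\max(|a|, d^2)$ for the naive Weil height at a rational point $P = (a/d^2, b/d^3) \in E(\mathbb{Q})$, and let $\hat h$ denote the canonical (N\'eron--Tate) height. The two key inputs are the quadraticity $\hat h(nQ) = n^2 \hat h(Q)$ and Silverman's bound $h(P) - \hat h(P) = O_E(1)$; since $Q$ has infinite order, $\hat h(Q) > 0$.

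The upper bound $\log d_n \ll_{E,Q} n^2$ is immediate: since $h(nQ) \geq \log d_n^2$, we have $2\log d_n \leq n^2 \hat h(Q) + O_E(1)$. For the lower bound $\log d_n \gg_{E,Q} n^2$, the plan is to decompose the canonical height as a sum of local heights, $\hat h = \hat\lambda_\infty + \sum_p \hat\lambda_p$. At a prime $p$ of good reduction with $p \mid d_n$, the point $nQ$ reduces to $\mathscr{O}$ and lies in the formal group of $E$ at $p$, where $\hat\lambda_p(nQ)$ equals $v_p(d_n)\log p$ up to the fixed normalization; the finitely many bad primes contribute only $O_{E}(1)$ each by Tate's explicit local formulas. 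The archimedean term is controlled by $\log^+|x(nQ)|$, and by David's lower bounds for linear forms in elliptic logarithms (which prevent $nQ$ from approaching $\mathscr{O}$ too rapidly in $E(\mathbb{R})$), $\log^+|x(nQ)| = O_{E,Q}((\log n)^c)$ for some constant $c$. Summing these contributions gives $\log d_n = \tfrac{1}{2} n^2 \hat h(Q) + O_{E,Q}((\log n)^c)$, which establishes the lower bound.

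For the final claim $\log D_n \gg_{E,Q} n^2$, the plan is to combine the estimate for $\log d_n$ with a standard ``lifting-the-exponent'' analysis for elliptic denominator sequences (good primes via the formal group law, bad primes via Tate). This yields
$$\log d_n \;=\; \sum_{m \mid n} \log D_m \;+\; O_{E,Q}(\log n).$$
Using $\log d_m \leq \tfrac{1}{2} m^2 \hat h(Q) + O_E(1)$ for each $m < n$ dividing $n$, together with the trivial bound $\log D_m \leq \log d_m$, I obtain
$$\log D_n \;\geq\; \log d_n - \sum_{\substack{m\mid n\\ m<n}} \log D_m - O_{E,Q}(\log n) \;\geq\; \tfrac{1}{2}\hat h(Q)\bigl(2n^2 - \sigma_2(n)\bigr) - O_{E,Q}(\log n),$$
where $\sigma_2(n) = \sum_{m\mid n} m^2$. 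Since $\sigma_2(n)/n^2 \leq \zeta(2) < 2$, the right-hand side is $\gg_{E,Q} n^2$.

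The main obstacle is the lower bound $\log d_n \gg n^2$: the upper bound is a formal consequence of height theory, but the lower bound relies on a genuinely transcendence-theoretic input (the David--Hirata--Laurent estimate for linear forms in elliptic logarithms). A secondary subtlety is the ``lifting-the-exponent'' identity relating $\log d_n$ to $\sum_{m\mid n}\log D_m$ with only an $O(\log n)$ error; this is routine at good primes but requires some case-work at primes of bad reduction.
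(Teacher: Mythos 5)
Your argument is correct, but it is worth noting that the paper itself does none of this work: its proof of Lemma \ref{size} consists of citing Silverman's Lemmas 8 and 9 from \cite{Silverman}, which give $(1-\epsilon)n^2\hat{h}(Q)+O_{\epsilon,E}(1)\leq \log d_n\leq n^2\hat{h}(Q)+O_E(1)$ and $\log D_n\geq (\tfrac13-\epsilon)n^2\hat{h}(Q)-\log n+O_{\epsilon,E}(1)$. What you have written is essentially a reconstruction of the proofs of those two cited lemmas. Your derivation of the $\log D_n$ bound is structurally identical to Silverman's: the inequality $\log D_n\geq \log d_n-\sum_{m\mid n,\,m<n}\log d_m-O(\log n)$ together with $\sum_{m\mid n}m^2\leq \zeta(2)n^2$ is exactly where his constant $\tfrac13$ comes from (via $\zeta(2)<\tfrac53$), and your observation that $2-\zeta(2)>0$ is the same mechanism. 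The one place you genuinely diverge is the lower bound for $\log d_n$: you invoke David's effective lower bounds for linear forms in elliptic logarithms to get an error term $O((\log n)^c)$, whereas the statement actually needed (and what Silverman proves) is only $\log d_n\geq(1-\epsilon)n^2\hat{h}(Q)+O_{\epsilon,E}(1)$, which follows from the much softer input $\hat{\lambda}_\infty(nQ)=o(n^2)$, i.e.\ Siegel's theorem. Your route buys an effective, power-of-log error term at the cost of deep transcendence theory; for the purposes of this lemma (and of Theorem \ref{elliptic-theorem}) that strength is not needed, and the $(1-\epsilon)$ form with the same leading constant as the upper bound already suffices for the $2n^2-\sigma_2(n)$ trick. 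Two small cautions: the bad primes do not contribute only $O_E(1)$ to the local height --- rather the discrepancy between $\hat{\lambda}_\pi(nQ)$ and $v_\pi(d_n)\log\pi$ is $O_E(1)$ per bad prime, which is what you actually need; and the identity $\log d_n=\sum_{m\mid n}\log D_m+O(\log n)$ does require the formal-group/lifting-the-exponent analysis you flag, since part (b) of Lemma \ref{d_n and D_n} only gives the divisibility $\prod_{m\mid n}D_m\mid d_n$, not the reverse inequality.
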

\begin{proof} From \cite[Lemma 8]{Silverman} we know that for any $\epsilon>0$,  
\begin{equation*}
\label{d_n upper bound}
(1-\epsilon)n^2 \hat{h}(Q)+O_{\epsilon, E}(1)\leq\log{d_n} \leq n^2 \hat{h}(Q)+O_E(1),
\end{equation*}
where $\hat{h}(Q)$ denotes the canonical height of the point $Q$.
On the other hand we know from \cite[Lemma 9]{Silverman} that there is a constant $n_0(E)$ so that for any $\epsilon>0$ and any $n\geq n_0(E)$, 
$$\log{D_n} \geq \left(\frac{1}{3}-\epsilon \right) n^2 \hat{h}(Q)-\log{n}+O_{\epsilon, E}(1).$$
These prove the assertions. 
\end{proof}

We also need the following lemma in the proof of Theorem \ref{elliptic-theorem}. 
\begin{lemma}
\label{A-M}
Suppose that $E$ has complex multiplication by the ring of integers of an imaginary quadratic field $L$. Let $m_{\rm sp}$ be the largest divisor of $m$ composed of primes that split completely in $L$. Then $$\sum_{\substack{{\pi\leq x}\\{m\mid n_\pi(E)}}} 1 \ll_L \frac{\tau(m_{\rm sp})}{m} x.$$
Here $\tau(n)$ denote the divisor function. 
\end{lemma}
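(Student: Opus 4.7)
The plan is to exploit the CM structure to translate the count into a prime-counting problem in $\mathcal{O}_L$, then apply a Brun--Titchmarsh estimate in the number field $L$.

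First, I would split the sum over $\pi\le x$ according to the splitting of $\pi$ in $L$. The ramified primes contribute $O_L(1)$. For a prime $\pi$ inert in $L$, one has $a_\pi=0$, so $n_\pi(E)=\pi+1$ and the divisibility $m\mid n_\pi(E)$ reduces to the congruence $\pi\equiv -1\pmod{m}$; Brun--Titchmarsh bounds this contribution by $\ll x/\phi(m)\ll x/m$, which is absorbed in the right-hand side (since $\tau(m_{\rm sp})\ge 1$).

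Second, for a prime $\pi$ with $\pi\mathcal{O}_L=\mathfrak{p}\bar{\mathfrak{p}}$ split in $L$, CM theory supplies a generator $\pi_L$ of $\mathfrak{p}$ (well-defined up to units) with
$$n_\pi(E)=N_{L/\mathbb{Q}}(\pi_L-1).$$
Thus the split-prime contribution counts $\pi_L\in\mathcal{O}_L$ with $N(\pi_L)\le x$ prime and $m\mid N(\pi_L-1)$.

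Third, I would count admissible residue classes modulo $m\mathcal{O}_L$. By the Chinese Remainder Theorem in $\mathcal{O}_L$, it suffices to count for each prime power $\ell^k\| m$. If $\ell$ splits in $L$ as $\ell\mathcal{O}_L=\lambda\bar{\lambda}$, the condition $\ell^k\mid N(\alpha-1)$ becomes $v_\lambda(\alpha-1)+v_{\bar\lambda}(\alpha-1)\ge k$, which is satisfied by $O((k+1)\ell^k)=O(\tau(\ell^k)\,\ell^k)$ residue classes in $\mathcal{O}_L/\ell^k\mathcal{O}_L$ (note $|\mathcal{O}_L/\ell^k\mathcal{O}_L|=\ell^{2k}$, and each choice of $(i,j)$ with $i+j\ge k$ gives $\ell^{2k-i-j}$ classes). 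If $\ell$ is inert, then $\mathcal{O}_L/\ell\mathcal{O}_L\cong\mathbb{F}_{\ell^2}$ and $\ell^k\mid N(\alpha-1)=(\alpha-1)(\bar\alpha-1)$ forces $\ell^{\lceil k/2\rceil}\mid \alpha-1$ in $\mathcal{O}_L$, giving only $O(\ell^k)$ classes — no combinatorial factor. Multiplying over $\ell\mid m$ produces at most $O(\tau(m_{\rm sp})\,m)$ admissible residue classes in $\mathcal{O}_L/m\mathcal{O}_L$.

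Fourth, for each such residue class, the Brun--Titchmarsh theorem over the number field $L$ bounds the number of prime elements $\pi_L\in\mathcal{O}_L$ with $N(\pi_L)\le x$ in that class by $\ll_L x/(\Phi_L(m)\log(x/N(m)))$, where $\Phi_L$ denotes the Euler function of $\mathcal{O}_L$. Since $\Phi_L(m)\asymp m^2\prod_{\ell\mid m}(1-1/\ell_L)$ with the product run over primes of $\mathcal{O}_L$ dividing $m$, summing over the $O(\tau(m_{\rm sp})\,m)$ classes yields a total bound $\ll_L \tau(m_{\rm sp})\,x/m$, after noting that each rational prime $\pi$ corresponds to two conjugate $\pi_L$'s. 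The main obstacle is the careful bookkeeping of the local factors — ensuring that the combinatorial $(k+1)$ factor appears only for primes splitting in $L$ and that the Euler-function denominator cancels one power of $m$ cleanly — together with a uniform Brun--Titchmarsh estimate over $L$ valid in the relevant range of $m$.
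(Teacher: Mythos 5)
The paper does not actually prove this lemma: the ``proof'' in the text is the single line ``See \cite[Proposition 2.3]{Akbary-Murty}'', so there is no internal argument to compare yours against. Your skeleton is the natural one (and, as far as I can tell, the same as in the cited source): separate $\pi$ by its splitting type in $L$, use $n_\pi(E)=N_{L/\mathbb{Q}}(\pi_L-1)$ for split $\pi$ (legitimate because $h_L=1$ when $E/\mathbb{Q}$ has CM by the maximal order), and count admissible residue classes modulo $m\mathcal{O}_L$ by CRT. Your class count is the heart of the lemma and it is correct: the combinatorial factor $k+1$ arises only at prime powers $\ell^k$ exactly dividing $m$ with $\ell$ split in $L$, while inert primes contribute at most $\ell^k$ classes; ramified primes $\ell\mid m$ are omitted from your case analysis but behave like inert ones, so the total is indeed $\ll\tau(m_{\rm sp})\,m$ classes out of $m^2$.

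The analytic step, however, has a genuine gap as written. Both invocations of Brun--Titchmarsh lose an Euler factor that the stated inequality does not allow: $x/\phi(m)\ll x/m$ is false in general, since $m/\phi(m)$ can be as large as a constant times $\log\log m$, and the identical loss reappears in replacing $\Phi_L(m\mathcal{O}_L)$ by $m^2$. As written the argument therefore yields $\ll_L\tau(m_{\rm sp})\,x\log\log m/m$, not the claimed bound (harmless in the application in Section 3, where it would be absorbed into $\epsilon$, but it does not prove the lemma as stated). In addition, Brun--Titchmarsh over $L$ requires the modulus norm $m^2$ to be small compared with $x$, and neither the lemma nor its application imposes this: in \eqref{second bound} the lemma is used with $m>n^{1-\epsilon}$ and $x=P(d_n)$, which is not known a priori to exceed $m^2$. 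Both defects disappear if you drop primality altogether: bound the inert contribution trivially by $\#\{n\le x:n\equiv -1\ (\mathrm{mod}\ m)\}\le x/m+1\ll x/m$ (note $m\mid \pi+1$ forces $m\le x+1$), and bound the split contribution by counting \emph{all} $\alpha\in\mathcal{O}_L$ with $N(\alpha)\le x$ in each admissible class via a lattice-point estimate, $\ll x/m^2+\sqrt{x}/m+1$ per class, which carries no Euler factor and no range restriction. Summing over the $\ll\tau(m_{\rm sp})m$ classes then gives the lemma for $m\le\sqrt{x}$; the complementary range $\sqrt{x}<m\ll x$ still needs a separate word (there the per-class count degenerates to $O(1)$ while the number of classes exceeds $x/m$, so one should instead argue from the $O(x/m)$ possible values $n_\pi(E)=jm\le(\sqrt{x}+1)^2$ and the number of ideals of $\mathcal{O}_L$ of each such norm). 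In short: right strategy and correct combinatorial core, but the quantitative bookkeeping you yourself flag as ``the main obstacle'' is exactly where the proof currently falls short.
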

\begin{proof}
See \cite[Proposition 2.3]{Akbary-Murty}. 
\end{proof}

We are ready to prove our result in the elliptic curve case.
\begin{proof}[{Proof of Theorem \ref{elliptic-theorem}}]
Since $E$ has $j$-invariant $0$ or $1728$, it has the Weierstrass equation $y^2=x^3+Ax$ or $y^2=x^3+B$. Note that both of these curves have complex multiplication. Here we describe the proof for $y^2=x^3+B$. The proof for $y^2=x^3+Ax$ is analogous.

Since $nQ=(a_n/d_n^2, b_n/d_n^3)$, we have that 
\begin{equation}
 \label{j(E)=0}
b_n^2-a_n^3-B d_n^6=0.
\end{equation}
Applying Conjecture \ref{ABC-conj} to \eqref{j(E)=0} and employing Part (a) of Lemma \ref{d_n and D_n}, we find that
\begin{eqnarray}
\nonumber
\max\{|b_n^2|, |a_n^3|, |B d_n^6|\} &\ll_{\epsilon, E, Q}& \left( \prod_{\pi \mid a_n b_n B \left(\prod_{m\mid n} D_m\right)} \pi \right)^{1+\epsilon} 
\end{eqnarray}
Recall that $D_n=U_n V_n$ is the decomposition of the primitive divisor $D_n$ of $d_n$ to the power-free part $U_n$ and power-full part $V_n$. Thus from the latter inequality we have
\begin{eqnarray}
\label{ABC}
\max\{|b_n^2|, |a_n^3|, |B d_n^6|\} &\ll_{\epsilon, E, Q}&  \left( |a_n b_n B| \prod_{m\mid n} \left( U_m V_m^{1/2}\right)\right)^{1+\epsilon}.
\end{eqnarray}
From \cite[p. 236]{Silverman} we have
\begin{equation*}
|a_nb_n|\leq \sqrt{2} \max \{|a_n^3|, |Bd_n^6|\}^{5/6}.
\end{equation*}
Substituting this bound in \eqref{ABC} and employing Part (b) of Lemma \ref{d_n and D_n} yields
\begin{eqnarray*}
d_n^{1-5\epsilon} \leq \max \{|a_n^3|, |Bd_n^6|\}^{(1-5\epsilon)/6} &\ll_{\epsilon, E, Q}& \left(\prod_{m\mid n} \left( U_m V_m^{1/2}\right)\right)^{1+\epsilon}\\
&\ll_{\epsilon, E, Q}& \left(d_n \prod_{m\mid n} V_m^{-1/2}\right)^{1+\epsilon}.
\end{eqnarray*}
From here we have
\begin{equation}
\label{V bound}
\prod_{m\mid n} V_m \leq C(\epsilon, E, Q) d_n^{\frac{12\epsilon}{1+\epsilon}},
\end{equation}
for some constant $C(\epsilon, E, Q)$ depending on $\epsilon$, $E$, and $Q$.
Now taking the logarithm of two sides of \eqref{V bound} and applying the upper bound for $\log{d_n}$ given in Lemma \ref{size} yields
\begin{equation}
\label{V log bound}
\sum_{m\mid n} \log{V_m} \ll_{\epsilon, E, Q} \frac{12\epsilon}{1+\epsilon} n^2+1.
\end{equation}
On the other hand the lower bound for $\log{D_n}$ given in Lemma \ref{size}  yields
\begin{equation}
 \label{D log bound}
\sum_{m\mid n} \log{D_m} \gg_{E, Q} n^{2}.
\end{equation}
Now, by choosing $\epsilon$ small enough,  \eqref{V log bound} and \eqref{D log bound} yield
\begin{eqnarray}
\label{first bound}
\nonumber
n^{2} &\ll_{\epsilon, E, Q}& \sum_{m\mid n} \log{U_m} \\
\nonumber
&\ll_{\epsilon, E, Q}& \sum_{\substack{{m\mid n} \\ {m<z}}} \log{U_m}+\sum_{\substack{{m\mid n}\\{m>z}}} \log{U_m}\\
&\ll_{\epsilon, E, Q}& \sum_{\substack{{m\mid n} \\ {m<z}}} \log{U_m}+\sum_{\substack{{m\mid n}\\{m>z}}} \sum_{\substack{{\pi\leq P(d_n)}\\{\pi\mid U_m,~\pi\nmid \Delta_E}}}\log{\pi}+\tau(n).
\end{eqnarray}
From Lemma \ref{d_n and D_n} (c) we know that if $\pi\nmid  \Delta_E$ and $\pi\mid U_m$ then ${\rm o}_\pi(Q)=m$. Since ${\rm o}_\pi(Q)\mid n_\pi(E)$ for such $\pi$ we have $m\mid n_\pi(E)$. Employing this fact in the second sum of \eqref{first bound} and applying the upper bound in Lemma \ref{size} for $\log{d_m}$ in the first sum of \eqref{first bound} (note that $\log{U_m} \leq \log{d_m}$) yield 
\begin{equation}
 \label{second bound}
n^{2} \ll_{\epsilon, E, Q} \tau(n) z^2+\left( \log{P(d_n)}\right) \sum_{\substack{{m\mid n }\\{m>z}}} \sum_{\substack{{\pi\leq P(d_n) }\\{m\mid n_\pi(E)}}} 1.
\end{equation}
Since $E$ has complex multiplication we can employ Lemma \ref{A-M} to estimate primes $\pi$ for which $m\mid n_\pi(E)$. Applying Lemma \ref{A-M} in \eqref{second bound} yields
$$n^{2} \ll_{\epsilon, E, Q} \tau(n) z^2+\tau(n) \frac{\left( {P(d_n)}\right)^{1+\epsilon}}{z^{1-\epsilon}}.$$
Now letting $z=n^{1-\epsilon}$ in this inequality implies the result.
\end{proof}


\subsection*{Acknowledgements}
The first author would like to thank Ram Murty for reference \cite{Pr} and the comment regarding the size of $\omega(a^n-1)$, Jeff Bleaney for Example \ref{Jeff}, and Adam Felix for comments on earlier versions of this paper.  



\bibliographystyle{amsalpha}

\begin{thebibliography}{MCS96}

\bibitem{Akbary-Murty}{ A. Akbary and V. K. Murty}, \textit{Reduction  mod $p$ of subgroups of the Mordell-Weil group of an elliptic curve}, {Int. J.  Number Theory}, \textbf{5} (3)(2009), 465--487. 

\bibitem{BV}{G. D. Birkhoff and H. S. Vandiver}, \textit{On the integral divisors of $a^n-b^n$}, {Ann. Math.} \textbf{5} (1904), 173--180.


\bibitem{Erdos}{P. Erd\H{o}s}, \textit{Some recent advances and current problems in number theory}, in {Lectures on modern mathematics, III}, edited by T. L. Saaty, John Wiley and Sons, 1965, pp. 196--244.

\bibitem{Econjecture}{P. Erd\H{o}s}, \textit{Bemerkungen zu einer Aufgabe in der Elementen}, {Arch. Math. (Basel)} \textbf{27} (1976), 159--163.

\bibitem{EM}{P. Erd\H{o}s and M. R. Murty}, \textit{On the order of $a$ (mod $p$)}, {CRM Proc. Lecture Notes} \textbf{19} (1999), 87--97.

\bibitem{FM}{A. T. Felix and M. R. Murty}, \textit{On a conjecture of Erd\H{o}s}, {Mathematika} \textbf{58} (2012), 275--289.




\bibitem{M}{R. C. Mason}, \textit{Equations over finite fields}, in {London Mathematical Society Lecture Notes Series}, 96. Cambridge University Press, Cambridge, 1984.

\bibitem{Masser}{D. W. Masser}, \textit{Open problems}, in {Proc. Symp. Analytic Number Theory}, (W. W. L. Chen, ed.), Imperial College London, 1985.

\bibitem{MW}{M. R. Murty and S. Wong}, \textit{The $ABC$ conjecture and prime divisors of the Lucas and Lehmer sequences}, {Number Theory for the Milennium III}, edited by M. A. Bennett, B. C. Berndt, N. Boston, H. G. Diamond, A. J. Hildebrand, and W. Philipp, A. K. Peters, Natick, MA, 2002, 43--54.



\bibitem{Pr}{K. Prachar},
\textit{\"{U}ber die Anzahl der Teiler einer nat\"{u}rlichen Zahl, welche die Form $p-1$ haben},
{Monatsh. Math.} \textbf{59} (1955), 91--97. 


\bibitem{R}{M. Rosen}, \textit{Number theory in function fields}, Springer, 2002.


\bibitem{S}{A. Schinzel}, \textit{On primitive prime factors of $a^n-b^n$}, {Proc. Cambridge Philos. Soc.} \textbf{58} (1962), 555--562.

\bibitem{Silverman}{J. Silverman}, \textit{Wieferich's prime and the $abc$-conjecture}, {J. Number Theory} \textbf{30} (1988), 226--237.


\bibitem{SS}{J. Silverman and N. Stephens}, \textit{The sign of an elliptic divisibility sequence}, {J. Ramanujan Math. Soc.} \textbf{21} (2006), 1--17.

\bibitem{ST}{J. Silverman and J. Tate}, \textit{Rational points on elliptic curves}, Springer, 1992.

\bibitem{St}{C. L. Stewart}, \textit{On divisors of Lucas and Lehmer numbers}, {Acta Math.} \textbf{211} (2013), 291--314.

\bibitem{W}{M. Ward}, \textit{Memoir on elliptic divisibility sequences}, {Amer. J. Math.} \textbf{70} (1948), 31--74.


\bibitem{Z}{K. Zsigmondy}, \textit{Zur Theorie der Potenzreste}, {Montash. Math.} \textbf{3} (1892), 265--284.



\end{thebibliography}

\end{document}